\documentclass[12pt]{amsart}

\usepackage{amsmath,amssymb,enumerate}
\newtheorem{theorem}{Theorem}[section]
\newtheorem{claim}{}[theorem]
\newtheorem{lemma}[theorem]{Lemma}

\newtheorem{conjecture}[theorem]{Conjecture}
\theoremstyle{definition}

\newcommand{\bN}{\mathbb N}

\newcommand{\cL}{\mathcal{L}}

\newcommand{\cM}{\mathcal{M}}
\newcommand{\cP}{\mathcal{P}}

\newcommand{\cX}{\mathcal{X}}
\DeclareMathOperator{\si}{si}

\DeclareMathOperator{\cl}{cl}

\DeclareMathOperator{\PG}{PG}
\DeclareMathOperator{\EX}{EX}
\DeclareMathOperator{\GF}{GF}

\newcommand{\elem}{\varepsilon}
\newcommand{\del}{\!\setminus\!}
\newcommand{\con}{/}
\newcommand{\grf}[3]{\frac{{#1}^{#3 + #2}-1}{{#1}-1} - {#1}\left(\frac{{#1}^{2{#2}}-1}{{#1}^2-1}\right)}

\begin{document}
\sloppy

\title[Exponential growth rates]{On minor-closed classes of matroids
with exponential growth rate}

\author[Geelen]{Jim Geelen}
\address{Department of Combinatorics and Optimization,
University of Waterloo, Waterloo, Canada}
% \email{jfgeelen@math.uwaterloo.ca}
\thanks{ This research was partially supported by a grant from the
Office of Naval Research [N00014-10-1-0851].}

\author[Nelson]{Peter Nelson}

\subjclass{05B35}
\keywords{matroids, growth rates}
\date{\today}

\begin{abstract}
Let $\cM$ be a minor-closed class of matroids that does not
contain arbitrarily long lines.  The growth rate function, $h:\bN\rightarrow
\bN$ of $\cM$ is given by
$$h(n) = \max\left( |M|\, : \, M\in \cM, \mbox{ simple, rank-$n$}\right).$$
The Growth Rate Theorem shows that there is an
integer $c$ such that either:
$h(n)\le c\, n$, or
${n+1 \choose 2} \le h(n)\le c\, n^2$, or
there is a prime-power $q$ such that $\frac{q^n-1}{q-1} \le h(n) \le c\, q^n$;
this separates classes into those of linear density,
quadratic density, and base-$q$ exponential density.  For classes of
base-$q$ exponential density that contain no $(q^2+1)$-point line, we prove
that $h(n) =\frac{q^n-1}{q-1}$ for all sufficiently large $n$.  We also prove that,
for classes of base-$q$ exponential density that contain no
$(q^2+q+1)$-point line, there exists $k\in\bN$ such that $h(n) =
\frac{q^{n+k}-1}{q-1} - q\frac{q^{2k}-1}{q^2-1}$ for all sufficiently large $n$.
\end{abstract}

\maketitle

\section{Introduction}

We prove a refinement of the Growth Rate Theorem for certain exponentially
dense classes.  We call a class of matroids {\em minor closed} if it is closed
under both minors and isomorphism.  The \textit{growth rate function},
$h_{\cM}:\bN\rightarrow \bN$, for a class $\cM$ of matroids is defined by
\[h_{\cM}(n) = \max(|M|: M \in \cM \text{ simple, } r(M) \le n).\] 

The following striking theorem summarizes the results of several
papers [\ref{gk},\ref{gkw},\ref{gw}].
\begin{theorem}[Growth Rate Theorem]\label{growthrates}
Let $\cM$ be a minor-closed class of matroids, not containing all simple
rank-$2$ matroids. Then there is an integer $c$ such that either:
\begin{enumerate}
\item $h_{\cM}(n) \le cn$ for all $n \ge 0$, or 
\item $\binom{n+1}{2} \le h_{\cM}(n) \le cn^2$ for all $n \ge 0$, and $\cM$
contains all graphic matroids, or
\item\label{expcase} there is a prime power $q$ such that
$\frac{q^n-1}{q-1} \le h_{\cM}(n) \le cq^n$ for all $n \ge 0$, 
and $\cM$ contains all $\GF(q)$-representable matroids. 
\end{enumerate}
\end{theorem}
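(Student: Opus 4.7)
The plan is to combine Kung's classical density bound with structural theorems that force projective-geometry minors in sufficiently dense matroids. Since $\cM$ omits some simple rank-$2$ matroid, there is an integer $\ell$ with $U_{2,\ell+2}\notin\cM$; letting $q$ be the largest prime power with $q\leq\ell$, Kung's bound yields $|M| \leq (q^n-1)/(q-1)$ for every simple rank-$n$ member of $\cM$. This already delivers an upper bound of the form $cq^n$ in all three cases, so the substantive task reduces to (i) establishing the claimed trichotomy via lower bounds, and (ii) in case~(\ref{expcase}), identifying the correct prime power so that the upper bound matches.

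For (i) and (ii), the approach I would take is a structural dichotomy. Let $q^\ast$ be the smallest prime power such that $\PG(n-1,q^\ast)$ is a minor of some member of $\cM$ for every $n$. If such a $q^\ast$ exists, then $h_\cM(n) \geq (q^{\ast n}-1)/(q^\ast-1)$, and since every $\GF(q^\ast)$-representable matroid is a minor of some $\PG(n-1,q^\ast)$, case~(\ref{expcase}) holds; matching the upper bound to $cq^{\ast n}$ requires refining Kung's bound by separately ruling out long lines beyond those available over $\GF(q^\ast)$. If no such $q^\ast$ exists, the goal becomes to show that $\cM$ has at most quadratic density, and further to distinguish linear from quadratic growth. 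The quadratic case should arise precisely when $\cM$ contains arbitrarily large $M(K_n)$, hence all graphic matroids, giving the lower bound $\binom{n+1}{2}$ and an upper bound $cn^2$ from an excluded-minor analysis of the graphic structure.

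The main obstacle is the structural claim that the absence of arbitrarily large projective-geometry minors over every prime-power field forces density to be at most quadratic, together with the companion claim that subquadratic density forces linearity. These statements form the heart of the Growth Rate Theorem and are genuinely deep; they require extracting dense graphic substructure from a dense matroid, likely through Ramsey-type arguments combined with a fine analysis of how projective or affine configurations appear inside minor-closed classes, and a separate argument handling the transition between linear and quadratic regimes via the (in)ability of $\cM$ to contain $M(K_n)$ for all $n$. The Kung upper bound and the explicit projective and graphic lower-bound constructions are, by comparison, essentially immediate; the hard work lies entirely in this three-way dichotomy, which is what the papers cited above accomplish.
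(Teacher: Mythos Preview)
The paper does not prove Theorem~\ref{growthrates}; it is stated as background and attributed to [\ref{gk},\ref{gkw},\ref{gw}], so there is no in-paper argument to compare your proposal against. Your outline is correctly framed as a sketch that ultimately defers the substantive structural work to those references, and your identification of where the difficulty lies --- that absence of projective-geometry towers over every field forces at most quadratic density, and absence of all $M(K_n)$ forces linear density --- matches how the literature is organised.

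Two corrections to the outline itself. First, Kung's theorem (Theorem~\ref{kungthm}) gives $\elem(M)\le\frac{\ell^{r(M)}-1}{\ell-1}$ with $\ell$ the integer coming from the excluded line $U_{2,\ell+2}$; you cannot simply replace $\ell$ by the largest prime power $q\le\ell$ in that bound. Second, and more seriously, your $q^\ast$ must be the \emph{largest} prime power for which $\PG(n-1,q^\ast)\in\cM$ for all $n$, not the smallest. Taking the smallest genuinely breaks the argument: for instance, if $\cM$ is the class of $\GF(4)$-representable matroids, then both $q=2$ and $q=4$ qualify, the smallest is $2$, yet $h_{\cM}(n)$ grows like $4^n$, so no bound of the form $c\,2^n$ holds and the trichotomy would fail. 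With the largest $q^\ast$, the matching upper bound $c\,q^{\ast n}$ comes from the fact that $\PG(m-1,q')\notin\cM$ for some $m$ whenever $q'>q^\ast$, and excluding such a projective geometry is exactly what [\ref{gk}] (via a statement like Lemma~\ref{gkthm}) uses to push the density down to base $q^\ast$.
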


If $\cM$ is a minor-closed class satisfying (\ref{expcase}), then we say that
$\cM$ is \emph{base-$q$ exponentially dense}. Our main theorems
precisely determine, for many such classes, the eventual value of the growth
rate function:

\begin{theorem}\label{main2}
Let $q$ be a prime power. If $\cM$ is a base-$q$ exponentially dense
minor-closed class of matroids such that $U_{2,q^2+1} \notin \cM$, then
\[h_{\cM}(n) = \frac{q^n-1}{q-1}\] for all sufficiently large $n$. 
\end{theorem}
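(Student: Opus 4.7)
The lower bound $h_{\cM}(n) \ge \tfrac{q^n-1}{q-1}$ is already part of
Theorem~\ref{growthrates}\,(\ref{expcase}), so the substance of
Theorem~\ref{main2} lies in the matching upper bound. The plan is an
induction on $n$: fix a threshold $n_0$ large enough that a base case at
rank $n_0$ can be absorbed (using, for instance, the crude bound
$h_\cM(n_0) \le cq^{n_0}$ already supplied by Theorem~\ref{growthrates}),
and then show that if $h_\cM(n') \le \tfrac{q^{n'}-1}{q-1}$ holds for
every $n_0 \le n' < n$, it also holds in rank $n$.

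The inductive step reduces to a single contraction. For any $e \in E(M)$,
the simplification $\si(M/e)$ lies in $\cM$ and has rank $n{-}1$, and its
parallel classes are in bijection with the rank-$2$ flats of $M$ through
$e$. Because $U_{2,q^2+1} \notin \cM$, each such line carries at most
$q^2$ points, so every parallel class of $M/e$ has size at most
$q^2{-}1$. The key observation is that if one can locate even a single
element $e$ through which \emph{every} line of $M$ has at most $q{+}1$
points, then every parallel class of $M/e$ has size at most $q$, so the
induction hypothesis gives
\[ |M|-1 \;\le\; q\,|\si(M/e)| \;\le\; q\cdot\tfrac{q^{n-1}-1}{q-1} \;=\; \tfrac{q^n-q}{q-1}, \]
which rearranges to exactly $|M| \le \tfrac{q^n-1}{q-1}$. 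Thus the whole
theorem reduces to producing one such ``projective'' element $e$ in every
simple $M \in \cM$ of rank $n \ge n_0$ (it would in fact suffice to do
this assuming, towards a contradiction, that $|M| > \tfrac{q^n-1}{q-1}$).

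Producing this element is the main obstacle, and is where the base-$q$
exponential density of $\cM$ must be used substantively: minor-closure
and the exclusion of $U_{2,q^2+1}$ on their own do not rule out the
possibility that every element of $M$ sits on some line of length between
$q{+}2$ and $q^2$. My plan is to invoke an approximate-structure
result of the kind underlying Theorem~\ref{growthrates}: for sufficiently
large $n$, any sufficiently dense simple $M \in \cM$ of rank $n$ must
contain a spanning $\GF(q)$-representable restriction $R$, inside which
every line has at most $q{+}1$ points. A density count---bounding, for
each candidate $e \in R$, the number of elements of $M \setminus R$ that
could collinearly extend a line of $R$ through $e$ to a line of $M$ of
length $\ge q{+}2$, and using the inductive upper bound on $|\si(M/e)|$
together with the line-length cap of $q^2$---should then show that
enough elements of $R$ are safe from all such extensions, producing the
desired $e$ and closing the induction. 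I expect the structural reduction
itself to be by far the hardest step; the contraction bound above is
essentially forced once any such ``projective'' element is in hand.
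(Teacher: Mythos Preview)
Your induction has a genuine base-case gap. The recursion $|M| \le 1 + q\,\elem(M/e)$ is \emph{exact}: if $h_\cM(n_0)$ exceeds $\tfrac{q^{n_0}-1}{q-1}$ by even $1$, the recursion only yields $h_\cM(n) \le \tfrac{q^n-1}{q-1} + q^{n-n_0}$, which is useless. The crude bound $h_\cM(n_0)\le cq^{n_0}$ cannot be ``absorbed'' here as it can in arguments targeting a bound with slack. Running the argument in reverse does not help either: if every overfull simple $M\in\cM$ of rank $\ge n_0$ had a projective element $e$, then indeed $\si(M/e)$ is again overfull (an integer strictly larger than $0$ after dividing by $q$ is still at least~$1$), but this descent terminates at rank $n_0$ without contradiction---nothing in your hypotheses forbids an overfull rank-$n_0$ matroid in $\cM$, and for small ranks such matroids certainly exist (e.g.\ any $(q{+}2)$-point line).

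The paper avoids this trap by not iterating a density inequality at all. Instead it proves that any $(q,0)$-overfull matroid in $\cM$ of sufficiently large rank has a minor containing $U_{2,q^2+1}$ as a restriction, yielding an immediate contradiction to $U_{2,q^2+1}\notin\cM$. The machinery for this---reducing to weakly round matroids, contracting onto a spanning $\PG(r-1,q)$-restriction, and analysing ``unstable'' elements relative to that projective geometry---is close in spirit to your ``spanning $\GF(q)$-representable restriction'' idea, but the spanning projective geometry is obtained only in a carefully chosen \emph{minor}, not in $M$ itself; there is no reason to expect one as a restriction of an arbitrary dense $M$. Once that restriction is in hand, a single contraction already produces a $(q^2{+}1)$-point line (Lemma~\ref{contractunstable} with $k=1$), which is what replaces your missing base case.
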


Consider, for example, the class $\cM$ of matroids with no
$U_{2,\ell+2}$-minor, where $\ell \ge 2$ is an integer.
By the Growth Rate Theorem, this class is base-$q$ exponentially 
dense, where $q$ is the largest prime-power not exceeding $\ell$.
Clearly $q^2>\ell$, so, by Theorem~\ref{main2},
$h_{\cM}(n) = \frac{q^n-1}{q-1}$ for all large $n$.
This special case is the main result of [\ref{gn}], which
essentially also contains a proof of Theorem~\ref{main2}. 

\begin{theorem}\label{main1}
Let $q$ be a prime power. If $\cM$ is a base-$q$
exponentially dense minor-closed class of matroids such that $U_{2,q^2+q+1}
\notin \cM$, then there is an integer $k \ge 0$ such that \[h_{\cM}(n) =
\grf{q}{k}{n}\] for all sufficiently large $n$. 
\end{theorem}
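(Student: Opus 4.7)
The plan is to extract an intrinsic integer $k = k(\cM)$ from the class and then establish matching lower and upper bounds on $h_\cM(n)$. For each $k \ge 0$ and large $n$, let $G(n, k, q)$ denote a specific rank-$n$ matroid of size $\tgrf{q}{k}{n}$, built by extending a $\PG(n+k-1, q)$ along a spread of a $\PG(2k-1, q)$-subflat (deleting all but one point of each spread line, then reducing to rank $n$ along a canonical transversal). A direct verification shows $G(n, k, q)$ contains no $U_{2, q^2+q+1}$-minor, since its long lines are built from single spread lines and thus contain at most $q^2+1$ points. Define $k(\cM)$ to be the largest nonnegative integer such that $G(n, k, q) \in \cM$ for all sufficiently large $n$; this is well-defined because $k = 0$ is always admissible (the Growth Rate Theorem guarantees $\cM$ contains all $\GF(q)$-representable matroids), and $k$ is bounded above by the density constraint $h_\cM(n) \le c q^n$.

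For the lower bound, membership $G(n, k, q) \in \cM$ directly yields $h_\cM(n) \ge \tgrf{q}{k}{n}$. The heart of the theorem is the matching upper bound, which I would approach by induction on $n$. Suppose $M \in \cM$ is simple, of rank $n$, with $|M|$ at or above $\tgrf{q}{k}{n}$. The strategy is to first locate a maximal family $\cL$ of pairwise skew $(q^2+1)$-lines in $M$ and argue, using the exclusion of $U_{2, q^2+q+1}$, that their union spans a flat $F'$ of some rank $2j$ on which $\cL$ behaves as a spread with $|\cL| = \tfrac{q^{2j}-1}{q^2-1}$. The simple matroid $M'$ obtained by replacing each line of $\cL$ with a single representative then has rank $n-j$, lies in a naturally associated minor-closed subclass of $\cM$ excluding $U_{2, q^2+1}$, and so Theorem~\ref{main2} gives $|M'| \le \tfrac{q^{n-j}-1}{q-1}$. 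Adding back the $q^2$ extra points per spread line recovers $|M| \le \tgrf{q}{j}{n}$, and maximality of $k(\cM)$ forces $j \le k$, closing the induction.

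The main obstacle is the structural step: proving that the $(q^2+1)$-lines in a near-extremal $M \in \cM$ form a spread of a subflat rather than interacting in uncontrolled ways. Ruling out pairs of intersecting long lines, or pairs that together span a flat of unexpectedly small rank, relies delicately on the exclusion of $U_{2, q^2+q+1}$ — two $(q^2+1)$-lines sharing a common rank-$3$ flat would, after a carefully chosen contraction, combine to produce a line of $q^2 + q + 1$ or more points — together with the extremality of $|M|$. A secondary technical issue is that the quotient $M'$ obtained by collapsing $\cL$ must lie in a class to which Theorem~\ref{main2} genuinely applies; this requires verifying minor-closure of the associated auxiliary class and its base-$q$ exponential density, likely via a separate connectivity or lift-and-project argument. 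Once the spread structure and the inductive quotient are set up, the remaining argument is bookkeeping via Theorem~\ref{main2}.
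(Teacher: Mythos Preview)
Your proposal takes a route quite different from the paper's, and it has real gaps on both the lower and upper bound sides.

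\textbf{Lower bound.} You define $k(\cM)$ via membership of specific matroids $G(n,k,q)$. But nothing guarantees that an abstract minor-closed class $\cM$ contains your particular $G(n,k,q)$ for any $k>0$; the extremal matroids realising $h_{\cM}(n)$ could be entirely different projections of projective geometries, or not projections at all. So your $k(\cM)$ can easily be strictly smaller than the ``true'' $k$, and then your lower bound is too weak. The paper avoids this by defining $k$ intrinsically (the largest $k$ for which $\cM$ contains $(q,k)$-overfull matroids of arbitrarily large rank) and then \emph{producing} rank-$n$ $(q,k+1)$-full matroids inside $\cM$ as minors of these overfull ones. That production step --- showing that overfull matroids have full minors of the next level --- is the real content of the paper, and it has no analogue in your outline.

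\textbf{Upper bound.} Your spread argument assumes far more structure than a near-extremal $M\in\cM$ need have. Such an $M$ is not known to contain any projective geometry restriction, and its $(q^2+1)$-point lines (if any exist) have no reason to be mutually skew, to cover a flat, or to form a spread; the heuristic ``two long lines in a common plane yield $U_{2,q^2+q+1}$ after contraction'' only works once you already have enough ambient $\PG$-structure to control what the contraction looks like. The paper spends Sections~7--9 forcing exactly this: it passes to a weakly round minor, uses the Growth Rate Theorem to find a large $\PG$-minor, and then manoeuvres (via ``critical elements'') until a spanning $\PG(r(M)-1,q)$-restriction is present, at which point Lemmas~\ref{longlinewin}--\ref{longlinewin2} and the matching arguments of Section~5 do work somewhat in the spirit of your spread idea. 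Without that preparation the spread claim is essentially assuming the theorem.

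Finally, your invocation of Theorem~\ref{main2} is ill-posed: that theorem bounds $h_{\cM'}(n)$ for a \emph{class} $\cM'$ and only for sufficiently large $n$, not $|M'|$ for a single matroid $M'$ of a specific rank. The ``naturally associated minor-closed subclass excluding $U_{2,q^2+1}$'' is never defined, and there is no evident candidate that is both minor-closed and base-$q$ exponentially dense.
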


Consider, for example, any proper minor-closed subclass $\cM$ of
the GF$(q^2)$-representable
matroids that contains all GF$(q)$-representable matroids.
Such classes are all base-$q$ exponentially dense
and do not contain $U_{2,q^2+2}$, so Theorem~\ref{main1} applies;
this special case is the main result of [\ref{sqf}].

If the hypothesis of Theorem~\ref{main1} is weakened to allow $U_{2,q^2+q+1}
\in \cM$, then the conclusion no longer holds. Consider the class
$\cM_1$ defined to be the set of truncations of
all GF$(q)$-representable matroids; note that
$U_{2,q^2+q+2} \notin \cM_1$ and 
$h_{\cM_1}(n) = \frac{q^{n+1}-1}{q-1}$ for all $n \ge 2$. 

More generally, for each $k \ge 0$, if $\cM_k$ is the 
set of matroids obtained from GF$(q)$-representable matroids 
by applying $k$ truncations, then 
 $h_{\cM_k}(n) =
\frac{q^{n+k}-1}{q-1}$ for all $n \ge 2$. This expression differs from that in
Theorem~\ref{main1} by only the constant
$q\left(\tfrac{q^{2k}-1}{q^2-1}\right)$.
It is conjectured~[\ref{sqf},\ref{thesis}] that, for each $k$,
these are the extremes in a small spectrum of possible growth rate functions:
\begin{conjecture}\label{mainconj}
Let $q$ be a prime power, and
$\cM$ be a base-$q$ exponentially dense minor-closed class of matroids. There
exist integers $k$ and $d$ with $k \ge 0$ and $0 \le d \le
\frac{q^{2k}-1}{q^2-1}$, such that $h_{\cM}(n) = \frac{q^{n+k}-1}{q-1} - qd$
for all sufficiently large $n$. 
\end{conjecture}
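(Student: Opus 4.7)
The plan is to extend the approach of Theorem~\ref{main1} by introducing a finer structural invariant tracking both the ``truncation depth'' $k$ and the ``defect'' $d$ of $\cM$. Since $\cM$ is base-$q$ exponentially dense, $h_{\cM}(n)/q^n$ is bounded; let $\alpha = \limsup_n h_{\cM}(n) q^{-n}$. The first step is to show $\alpha(q-1) = q^k$ for some nonnegative integer $k$. This should follow by iterating the structural core behind the Growth Rate Theorem: if $\cM$ had density strictly between $\frac{q^{n+k}-1}{q-1}$ and $\frac{q^{n+k+1}-1}{q-1}$ for infinitely many $n$, then one could extract a spanning projective geometry in a large extremal matroid, pass to a minor, and iterate, eventually forcing $\cM$ to contain the entire $(k+1)$-fold truncation of $\PG(r,q)$ for arbitrarily large $r$, pushing the density into the next exponential band.

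Having fixed $k$, define the \emph{deficit} $f(n) = \frac{q^{n+k}-1}{q-1} - h_{\cM}(n)$, which is eventually nonnegative and $o(q^n)$. The next goal is to show that $f$ is eventually constant, equal to $qd$ for some $d \in \{0,\ldots,\tfrac{q^{2k}-1}{q^2-1}\}$. The idea is to extract from a sequence of extremal matroids $(M_n)$ a common ``top structure'' relative to a spanning truncated projective geometry: up to isomorphism, only finitely many deletion patterns from the ``lost'' top of a $k$-fold truncation of $\PG(r,q)$ can arise, so some pattern must recur along a subsequence. Minor-closedness of $\cM$ then propagates this pattern to all sufficiently large extremal matroids, stabilizing $f(n)$; the truncation-plus-deletion normal form accounts for both the divisibility by $q$ and the upper bound on $d$.

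The main obstacle is the possibility of arbitrarily long lines. The $U_{2,q^2+q+1}$-minor exclusion in Theorem~\ref{main1} is precisely what forces extremal matroids to be essentially $\GF(q^k)$-representable for a suitable $k$; dropping it, one must contend with long lines that may, a priori, be arranged in ways not governed by any $\GF(q)$-projective geometry. To overcome this I would seek a line-reduction lemma: show that any long line in an extremal matroid $M_n \in \cM$ can be replaced, at a cost absorbed into the deficit, by a configuration living inside a truncated $\GF(q)$-projective geometry. Making this normal form rigorous is, in my view, the crux of the problem, and probably requires a substantial extension of the matroid structure theory underlying the Growth Rate Theorem, perhaps along the lines developed for Theorems~\ref{main2} and~\ref{main1}.
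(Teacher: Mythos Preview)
The statement you are attempting to prove is labelled \emph{Conjecture} in the paper, and the paper does not claim or contain a proof of it. The only relevant contribution the paper makes is to show, via Lemma~\ref{contractdisjointflat}, that the stronger structural Conjecture~\ref{bigconj} implies Conjecture~\ref{mainconj}; neither conjecture is resolved. So there is no ``paper's own proof'' to compare against, and any correct proof would be a new result.

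Your proposal is not a proof but a research programme, and you are candid about this in the final paragraph. Still, several of the earlier steps are presented as if they were routine when they are not. First, the claim that $\alpha(q-1)$ must be an integral power of $q$ is essentially the heart of the conjecture; your suggested iteration (extract a spanning projective geometry, pass to a minor, repeat) is exactly what the paper does under the hypothesis $U_{2,q^2+q+1}\notin\cM$, and the paper's machinery (Lemmas~\ref{longlinewin}, \ref{longlinewin2}, \ref{criticalwin}, \ref{spanningwin}) breaks down precisely when long lines are allowed. Second, the ``finitely many deletion patterns'' argument for stabilising the deficit presupposes that extremal matroids sit inside truncated projective geometries in a controlled way, which is the content of Conjecture~\ref{bigconj} and is not known. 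There is no a priori reason only finitely many top structures can occur once arbitrary long lines are permitted. Third, your proposed ``line-reduction lemma'' is exactly the missing ingredient, and you correctly identify it as such; but nothing in the proposal indicates how to obtain it, and the examples $\cM_k$ of iterated truncations given in the paper already show that long lines can interact with the projective-geometry structure in nontrivial ways.

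In short: you have accurately located the difficulty, but the proposal does not close the gap, and the paper itself leaves the statement open.
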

We conjecture further that,
for every allowable $q$, $k$ and $d$, there exists a minor-closed class with
the above as its eventual growth rate function. 

There is a stronger conjecture~[\ref{thesis}] regarding the exact structure of
the extremal matroids. For a non-negative integer $k$, 
a \emph{$k$-element projection} of a matroid $M$ is a matroid of
the form $N \con C$, where $N \del C = M$, and $C$ is a $k$-element set of $N$.

\begin{conjecture}\label{bigconj}
Let $q$ be a prime power, and $\cM$ be a base-$q$ exponentially dense
minor-closed class of matroids. There exists an integer $k \ge 0$ such that, if
$M \in \cM$ is a simple matroid of sufficiently large rank with $|M| =
h_{\cM}(r(M))$, then $M$ is the simplification of a $k$-element projection of a
projective geometry over $\GF(q)$. 
\end{conjecture}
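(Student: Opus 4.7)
The plan is to prove Conjecture~\ref{bigconj} by extracting a sharp structural description from the proof techniques that yield Theorem~\ref{main1} (and, where needed beyond the $U_{2,q^2+q+1}$-free hypothesis, from the methods contemplated for Conjecture~\ref{mainconj}). Let $M$ be a simple extremal matroid of sufficiently large rank $n$ in $\cM$, with $|M| = h_\cM(n) = \tfrac{q^{n+k}-1}{q-1} - qd$ for the integers $k,d$ supplied by Theorem~\ref{main1}. The goal is to produce a matroid $N$ of rank $n+k$ and a $k$-element set $C \subseteq E(N)$ such that $N \del C$ is isomorphic to $\PG(n+k-1,q)$ and $\si(N \con C) = M$.

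First, I would show that $M$ has a near-spanning $\GF(q)$-projective geometry minor: there exists $C_0 \subseteq E(M)$ of rank $k$ with $\si(M \con C_0) \cong \PG(n-k-1,q)$. The growth-rate machinery already produces a large $\PG(\cdot,q)$-minor whenever $|M|$ is near-extremal; the point here is to pin down the rank of $C_0$ exactly, using the precise formula $\tfrac{q^{n+k}-1}{q-1} - qd$. A careful double count of the parallel classes of $M \con C_0$ against this formula should force each class to have a prescribed size, matching the number of non-$C_0$ points on each rank-$(k+1)$ flat of a putative $\PG(n+k-1,q)$ containing the closure of $C_0$.

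Second, I would lift the configuration: the parallel classes of $M \con C_0$, once indexed by the points of $\PG(n-k-1,q)$, should assemble into a rank-$(n+k)$ matroid $N$ on $E(M) \cup C$ (with $|C|=k$) that is $\GF(q)$-representable and has $N \del C = \PG(n+k-1,q)$. A candidate $\GF(q)$-representation can be built by mapping each parallel class into a distinct rank-$(k+1)$ flat of $\PG(n+k-1,q)$ containing a fixed rank-$k$ flat spanned by $C$; one then verifies that $\si(N \con C) = M$ and that independence in $N \del C$ is the full projective-geometry independence. The minor-closed hypothesis, together with the absence of long uniform lines, should constrain how elements of $M$ can sit in a single parallel class and exclude non-$\GF(q)$-representable local twists.

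The main obstacle is this last rigidity step: a priori, matroids that are only locally close to simplifications of $k$-element projections of projective geometries might still attain $h_\cM(n)$ through twists that preserve the element count but spoil global $\GF(q)$-coordinatization. Ruling these out requires a global rigidity theorem for densest matroids inside $\GF(q)$-exponential minor-closed classes; I would attempt this via a Ramsey/amalgamation argument exploiting many overlapping $\PG$-minors of $M$, each of which forces a consistent $\GF(q)$-coordinate system on a large part of the lift $N$. Combining these consistent charts, and invoking the excluded-line hypothesis to eliminate ambiguities on pairs of intersecting flats, should pin down $N$ as a projective geometry and identify $M$ with the simplification of the corresponding $k$-element projection.
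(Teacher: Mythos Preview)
The statement you are trying to prove is \emph{Conjecture}~\ref{bigconj}, and the paper does not prove it; it is stated as an open problem. The only thing the paper establishes about it is that it implies Conjecture~\ref{mainconj}, via Lemma~\ref{contractdisjointflat}. So there is no ``paper's own proof'' to compare your proposal against, and any complete argument here would be new mathematics, not a reconstruction.

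Your proposal, as written, is a research outline rather than a proof, and it has genuine gaps. First, you invoke Theorem~\ref{main1} to obtain the exact formula $h_\cM(n)=\tfrac{q^{n+k}-1}{q-1}-qd$, but Theorem~\ref{main1} requires $U_{2,q^2+q+1}\notin\cM$, a hypothesis that Conjecture~\ref{bigconj} does not carry. For general base-$q$ exponentially dense classes even the weaker Conjecture~\ref{mainconj} is open, so you cannot assume the growth rate has this form. Second, even restricting to $\cM\subseteq\EX(U_{2,q^2+q+1})$, the step you yourself flag as the ``main obstacle'' --- showing that an extremal $M$ globally lifts to a $\GF(q)$-representable $N$ with $N\del C\cong\PG(n+k-1,q)$ --- is exactly the content of the conjecture, and your Ramsey/amalgamation sketch does not supply a mechanism for gluing local $\PG$-charts into a single projective geometry. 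Nothing in the paper's toolkit (Lemmas~\ref{pgframe}--\ref{criticalwin}) produces such a global coordinatization; those lemmas control densities and find dense minors, but do not certify that the extremal matroid itself is (the simplification of) a projection. In short, you have correctly identified where the difficulty lies, but the proposal does not overcome it.
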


We will show, as was observed in [\ref{thesis}], that this conjecture implies the previous one; see 
Lemma~\ref{contractdisjointflat}.

\section{Preliminaries}

A matroid $M$ is called {\em $(q,k)$-full} if
$$ \varepsilon(M) \ge \frac{q^{r(M)+k}-1}{q-1} - q\frac{q^{2k}-1}{q^2-1};  $$
moreover, if strict inequality holds, $M$ is {\em $(q,k)$-overfull}.
%Similarly, a set $X\subseteq E(M)$ is 
%$(q,k)$-full or $(q,k)$-overfull when $M|X$ is.

Our proof of Theorem~\ref{main1} follows a strategy similar to
that in [\ref{sqf}]; we show that, for any integer $n>0$, 
every $(q,k)$-overfull matroid in $\EX(U_{2,q^2+q+1})$, with sufficiently large
rank, contains a $(q,k+1)$-full rank-$n$ minor.
The Growth Rate Theorem tells us that a given base-$q$
exponentially dense minor-closed
class cannot contain $(q,k)$-full matroids for arbitrarily
large $k$, so this gives the result.
The proof of Theorem~\ref{main2} is easier and will
shake out along the way.

We follow the notation of Oxley [\ref{oxley}]; flats of rank $1$, $2$ and $3$
are respectively \emph{points}, \emph{lines} and \emph{planes} of a matroid. If
$M$ is a matroid, and $X, Y \subseteq E(M)$, then $\sqcap_M(X,Y) = r_M(X) +
r_M(Y) - r_M(X \cup Y)$ is the \textit{local connectivity} between $X$ and $Y$.
If $\sqcap_M(X,Y) = 0$, then $X$ and $Y$ are \emph{skew} in $M$, and if $\cX$
is a collection of sets in $M$ such that each $X \in \cX$ is skew to the union
of the sets in $\cX - \{X\}$, then $\cX$ is a \textit{mutually skew} collection
of sets.  A pair $(F_1,F_2)$ of flats in $M$ are {\em modular}, if 
$\sqcap_M(F_1,F_2) = r_M(F_1\cap F_2)$, and a flat $F$ of $M$ is
{\em modular} if, for each flat $F'$ of $M$, the pair $(F,F')$
is modular. In a projective geometry each pair of flats is
modular and, hence, each flat is modular.

For a matroid $M$, we write $|M|$ for $|E(M)|$, and $\elem(M)$
for $|\si(M)|$, the number of points in $M$. Thus, $h_{\cM}(n) = \max(\elem(M):
M \in \cM, r(M) \le n)$. Two matroids are equal \emph{up to simplification} if
their simplifications are isomorphic. We let $\EX(M)$ denote the set of matroids
with no $M$-minor; Theorems~\ref{main2} and~\ref{main1} apply to subclasses of $\EX(U_{2,q^2+1})$ and
$\EX(U_{2,q^2+q+1})$ respectively. The following theorem of Kung [\ref{kung}] bounds
the density of a matroid in $\EX(U_{2,\ell+2})$:

\begin{theorem}\label{kungthm}
Let $\ell \ge 2$ be an integer. If $M \in \EX(U_{2,\ell+2})$ is a matroid, then
$\elem(M) \le \frac{\ell^{r(M)}-1}{\ell-1}$. 
\end{theorem}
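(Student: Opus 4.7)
The plan is to prove the bound by induction on $r(M)$. The base case $r(M) \le 1$ is immediate, since $\elem(M) \le 1 = \frac{\ell-1}{\ell-1}$ when $r(M) = 1$, and $\elem(M) = 0$ when $r(M) = 0$. So assume the result holds in rank less than $r(M)$, and let $M \in \EX(U_{2,\ell+2})$ have rank $r = r(M) \ge 2$.

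Pick any point $e$ of $M$ and contract it. Since $\EX(U_{2,\ell+2})$ is minor-closed, $M/e \in \EX(U_{2,\ell+2})$, and $r(M/e) = r-1$. The key observation is a correspondence between the points of $M$ and those of $M/e$: each point of $M/e$ is a rank-$2$ flat of $M$ through $e$ (i.e.\ a line of $M$ containing $e$), and each such line has at most $\ell + 1$ points, since otherwise it would yield a $U_{2,\ell+2}$-restriction of $M$. Thus every point of $M$ is either $e$ itself or lies on exactly one of the $\elem(M/e)$ lines through $e$, and each such line contains at most $\ell$ points of $M$ distinct from $e$. This gives the inequality
\[
\elem(M) \le 1 + \ell \cdot \elem(M/e).
\]

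Applying the induction hypothesis to $M/e$ yields
\[
\elem(M) \le 1 + \ell \cdot \frac{\ell^{r-1}-1}{\ell - 1} = \frac{(\ell-1) + \ell^{r} - \ell}{\ell-1} = \frac{\ell^r - 1}{\ell-1},
\]
which completes the induction. There is no real obstacle here: the argument is a clean induction, and the only thing to check carefully is the point/line correspondence under contraction, which is a standard fact about simplification of $M/e$. The hypothesis $\ell \ge 2$ is used implicitly to ensure $\ell - 1 \neq 0$ so that the closed-form bound makes sense.
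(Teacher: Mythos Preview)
Your proof is correct and is essentially the classical argument due to Kung. Note that the paper does not actually prove this statement: it is quoted as a known result of Kung with a citation to [\ref{kung}], so there is no in-paper proof to compare against. Your induction on rank via contraction of a point, together with the observation that each line through $e$ contributes at most $\ell$ further points, is exactly the standard proof.
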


The next result is an easy application of the Growth Rate Theorem.
\begin{lemma}\label{gkthm} 
There is a real-valued function $\alpha_{\ref{gkthm}}(n,\beta,\ell)$ so that,
for any integers $n \ge 1$ and $\ell \ge 2$, and real number $\beta > 1$, if $M
\in \EX(U_{2,\ell+2})$ is a matroid such that $\elem(M) \ge
\alpha_{\ref{gkthm}}(n,\beta,\ell) \beta^{r(M)}$, then $M$ has a
$\PG(n-1,q)$-minor for some $q > \beta$.  
\end{lemma}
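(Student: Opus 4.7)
The plan is to apply the Growth Rate Theorem to the auxiliary class
$\cN$ consisting of all matroids in $\EX(U_{2,\ell+2})$ that have no
$\PG(n-1,q)$-minor for any prime power $q > \beta$. Note that $\cN$
is minor-closed (it is the intersection of minor-closed classes) and,
since $U_{2,\ell+2}\notin\cN$, it does not contain all simple rank-$2$
matroids. The goal is to bound $h_{\cN}(r)$ above by some function of
the form $\alpha\beta^r$; the contrapositive of this bound is exactly
the conclusion of the lemma.

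The Growth Rate Theorem applied to $\cN$ produces an integer $c$
(depending on $n,\beta,\ell$) and one of three possible estimates.
In the linear or quadratic case, $h_{\cN}(r) \le c r^2$; since
$r^2/\beta^r$ attains a finite maximum over $r \ge 0$ (tending to
$0$ as $r \to \infty$), this easily yields $h_{\cN}(r) \le \alpha\beta^r$
for a suitable constant $\alpha$ depending only on $c$ and $\beta$.
In the exponential case, the theorem gives $h_{\cN}(r) \le c(q')^r$
for some prime power $q'$, and tells us that $\cN$ contains every
$\GF(q')$-representable matroid, in particular $\PG(n-1,q')$. By the
definition of $\cN$ this forces $q' \le \beta$, and hence
$h_{\cN}(r) \le c\beta^r$.

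Finally, I would define $\alpha_{\ref{gkthm}}(n,\beta,\ell)$ to be
any real number that dominates the bound obtained in whichever of
the three cases applies (equivalently, I could take the maximum of the
three potential bounds, since only one case actually occurs for the
given class). Any matroid $M\in\EX(U_{2,\ell+2})$ with
$\elem(M) \ge \alpha_{\ref{gkthm}}(n,\beta,\ell)\beta^{r(M)}$ then
fails to lie in $\cN$, meaning it has a $\PG(n-1,q)$-minor for some
prime power $q>\beta$.

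There is no real obstacle here; the lemma is a soft corollary of the
Growth Rate Theorem. The only point that requires mild care is
packaging the three cases into a single constant $\alpha$ and observing
that in the exponential case the extremal prime power $q'$ must be
at most $\beta$, which is forced by the presence of $\PG(n-1,q')$ in
the minor-closed class generated by that case.
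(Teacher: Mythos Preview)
Your proposal is correct and is exactly the intended approach: the paper itself provides no proof, stating only that the lemma ``is an easy application of the Growth Rate Theorem,'' and your argument spells out precisely that application. The one cosmetic point is to ensure the inequality is strict (so that $\elem(M)\ge\alpha\beta^{r(M)}$ genuinely forces $M\notin\cN$); taking $\alpha$ strictly larger than the constant $c$ produced by the Growth Rate Theorem handles this.
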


The following lemma was proved in [\ref{sqf}]:
\begin{lemma}\label{skewsubset}
Let $\lambda, \mu$ be real numbers with $\lambda > 0$ and $\mu > 1$, let $t \ge
0$ and $\ell \ge 2$ be integers, and let $A$ and $B$ be disjoint sets of
elements in a matroid $M \in \EX(U_{2,\ell+2})$ with $r_M(B) \le t < r(M)$ and
$\elem_M(A) > \lambda \mu^{r_M(A)}$. Then there is a set $A' \subseteq A$ that
is skew to $B$ and satisfies $\elem_M(A') > \lambda
\left(\frac{\mu-1}{\ell}\right)^{t}\mu^{r_M(A')}$.
\end{lemma}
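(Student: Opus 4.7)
The natural approach is induction on $t$. The base case $t=0$ is immediate: $r_M(B)=0$ means $B$ consists entirely of loops, hence is skew to any set, so we take $A'=A$ and the required inequality reduces to the hypothesis $\elem_M(A)>\lambda\mu^{r_M(A)}$ (since the factor $((\mu-1)/\ell)^0=1$).

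For the inductive step, let $t\ge 1$ and assume the lemma for $t-1$. Without loss of generality $M$ is simple; moreover, since $A'$ being skew to $B$ depends only on $\cl_M(B)$, we may replace $B$ by a basis and assume it is independent with $|B|=t$. Pick any $e\in B$, so that $r_M(B\setminus\{e\})=t-1$, and apply the inductive hypothesis to $(A,B\setminus\{e\})$ to obtain $A^*\subseteq A$ skew to $B\setminus\{e\}$ in $M$ with $\elem_M(A^*)>\lambda((\mu-1)/\ell)^{t-1}\mu^{r_M(A^*)}$. Pass now to the minor $M' := M/(B\setminus\{e\})$, which again lies in $\EX(U_{2,\ell+2})$. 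Because $A^*$ is skew to $B\setminus\{e\}$, both its rank and its point count are preserved in $M'$: $r_{M'}(A^*)=r_M(A^*)$ and $\elem_{M'}(A^*)=\elem_M(A^*)$; moreover $e$ is a non-loop of $M'$. So the $t=1$ case of the lemma applies to $(A^*,\{e\})$ inside $M'$ (with parameter $\lambda((\mu-1)/\ell)^{t-1}$), yielding $A'\subseteq A^*$ skew to $\{e\}$ in $M'$ with $\elem_{M'}(A')>\lambda((\mu-1)/\ell)^{t}\mu^{r_{M'}(A')}$. One then checks that $A'$ is skew to $B$ in $M$: as a subset of $A^*$ it is still skew to $B\setminus\{e\}$, and $M'$-skewness to $\{e\}$ translates to $e\notin\cl_M(A'\cup(B\setminus\{e\}))$, so $r_M(A'\cup B)=r_M(A')+r_M(B)$.

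The heart of the proof is therefore the case $t=1$: given a non-loop $e\notin A$ with $\elem_M(A)>\lambda\mu^{r_M(A)}$, produce $A'\subseteq A$ with $e\notin\cl_M(A')$ and $\elem_M(A')>\lambda((\mu-1)/\ell)\mu^{r_M(A')}$. If $e\notin\cl_M(A)$ take $A'=A$; otherwise the plan is to find a hyperplane $H$ of $F := \cl_M(A)$ with $e\notin H$ and $|A\cap H|$ sufficiently large, then set $A' := A\cap H$, which is automatically skew to $\{e\}$ since $\cl_M(A')\subseteq H$. A natural family of candidates comes from a basis $\beta$ of $A$: let $C=\{e\}\cup C'$ be the unique circuit of $\beta\cup\{e\}$ expressing $e$ over $\beta$, and for each $c\in C'$ take $H_c := \cl_M(\beta\setminus\{c\})$, which avoids $e$.

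The main obstacle is showing, via an averaging or pigeonhole argument over $c\in C'$ (equivalently, over the lines of $F$ through $e$), that some $H_c$ satisfies $|A\cap H_c|>\lambda((\mu-1)/\ell)\mu^{r_M(A)-1}$. The factor $(\mu-1)/\ell$ should emerge by balancing the line bound $|L\cap A|\le\ell$ for each line $L$ through $e$ (supplying the $1/\ell$) against the density slack $\mu-1$ inherent in the hypothesis $\elem_M(A)>\lambda\mu^{r_M(A)}$, together with Kung's bound $|F\setminus H|\le\ell^{r_M(A)-1}$ for any hyperplane $H$ of $F$. Making this counting precise, and ruling out pathological distributions of $A$ among the lines through $e$, will be the delicate step of the argument.
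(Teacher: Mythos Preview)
The paper does not prove this lemma: it is quoted verbatim from reference~[\ref{sqf}] with the sentence ``The following lemma was proved in~[\ref{sqf}]'', and no argument is given. So there is no proof in the paper to compare your attempt against; I can only assess your outline on its own merits.

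Your inductive reduction from general $t$ to $t=1$ is clean and correct. The contraction $M'=M/(B\setminus\{e\})$ preserves both the rank and the simplification of $A^*$ because $A^*$ is skew to $B\setminus\{e\}$ (local connectivity is monotone, so every subset of $A^*$ stays skew), and the translation of $M'$-skewness back to $M$-skewness is handled properly.

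The genuine gap is the $t=1$ case, which you explicitly do not finish. Two points deserve attention. First, the sentence ``If $e\notin\cl_M(A)$ take $A'=A$'' silently assumes $(\mu-1)/\ell\le 1$; without that, $\elem_M(A)>\lambda\mu^{r_M(A)}$ does not imply $\elem_M(A)>\lambda\tfrac{\mu-1}{\ell}\,\mu^{r_M(A)}$. Every application in the present paper has $\mu=q-\tfrac12$ and $\ell=q^2+q-1$, so this is harmless here, but it is a hypothesis you are using. Second, your hyperplane scheme for the case $e\in\cl_M(A)$ is only a plan: you have identified the right ingredients (Kung's bound, the $\le\ell$ points of $A$ per line through $e$, the hyperplanes $H_c$ avoiding $e$), but you have not produced an inequality. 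In a general matroid a line through $e$ need not meet a given hyperplane of $\cl_M(A)$ at all, so the ``average over $c\in C'$'' you allude to does not work naively; one typically needs an extra step, for instance first replacing $A$ by a subset whose rank drops by one while keeping at least a $(\mu-1)/(\ell\mu)$ or $1/\ell$ fraction of the points, and then iterating. As written, the proposal is a reasonable outline whose decisive estimate is still missing.
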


\section{Projections}\label{projsection}

Recall that a $k$-element projection of a matroid $M$ is a matroid of the form
$N \con C$, where $C$ is a $k$-element set of a matroid $N$
satisfying $N \del C = M$.

In this section we are concerned with projections of projective 
geometries. Consider a $k$-element set $C$ in a matroid
$N$ such that $N\del C=\PG(n+k-1,q)$ and let $M=N\con C$. Thus $M$ is a 
$k$-element projection of $\PG(n+k-1,q)$. Below are easy 
observations that we use freely.
\begin{itemize}
\item If $C$ is not independent,
then $M$ is a $(k-1)$-element projection of $\PG(n+k-1,q)$.
\item If $C$ is not coindependent, then $M$ is a $(k-1)$-element
projection of $\PG(n+k-1,q)$.
\item If $C$ is not closed in $N$, then $M$ is, up to simplification,
a $(k-1)$-element projection of $\PG(n+k-2,q)$.
\item $M$ has a $\PG(r(M)-1,q)$-restriction.
\end{itemize}

Our next result gives the density of projections of projective geometries;
given such a projection $M$, this density is determined to within a small range
by the minimum $k$ for which $M$ is a $k$-element projection. As mentioned
earlier, this theorem also tells us that Conjecture~\ref{bigconj} implies
Conjecture~\ref{mainconj}. 
\begin{lemma}\label{contractdisjointflat}
Let $q$ be a prime power, and $k \ge 0$ be an integer. If $N$ is a matroid, and
$C$ is a rank-$k$ flat of $N$ such that $N \del C \cong \PG(r(N)-1,q)$, then
$\elem(N \con C) = \elem(N \del C) - qd$ 
for some $d \in\{0,1,\ldots, \frac{q^{2k}-1}{q-1}\}$.
\end{lemma}	

\begin{proof}
Each point $P$ of $N\con C$ is a flat of
the projective geometry $N \del C$, so $|P| =
\frac{q^{r_N(P)}-1}{q-1}= 1+q \frac{q^{r_N(P)-1}-1}{q-1}$.
Therefore $\elem(N \del C) - \elem(N \con C)$ is a multiple of $q$.

Let $\cP$ denote the set of all points in $N\con C$ that
contain more than one element, and let 
$F$ be the flat of $N\del C$ spanned by the union of these points.
Choose a set $\cP_0\subseteq \cP$ of $r_{N\con C}(F)$ points
spanning $F$ in $N\con C$; if possible
choose $\cP_0$ so that it contains a set in $P\in\cP$ with $r_N(P)>2$.
Note that: (1) the points in $\cP_0$ are mutually skew in $N\con C$,
(2) each pair of flats of $N\del C$ is modular, and (3)
$C$ is a flat of $N$. It follows that
$\cP_0$ is a mutually skew collection of flats in $N$.
Now, for each $P\in \cP_0$, 
$r_N(P)> r_{N\con C}(P)$. Therefore, since
$r(N)-r(N\con C) = k$, we have
$r_{N\con C}(F) = |\cP_0|\le k$.
Moreover, if $r_{N\con C}(F) = k$,
then each set in $\cP_0$ is a line of $N\del C$,
and, hence, by our choice of $\cP_0$, each set in $\cP$ is a line in $N\del C$.

If $r_{N\con C}(F)=k$, then we have $|F| = \frac{q^{2k}-1}{q-1}$ and $|\cP|\le \frac{|F|}{q+1}$. This gives $\elem(N\del C) - \elem(N\con C) \le  q \frac{|F|}{q+1} 
=  q \frac{q^{2k}-1}{q^2-1},$ as required.
 
If $r_{N\con C}(F) <k$, then 
$\elem(N\del C) - \elem(N\con C) \le  |F|  \le  \frac{q^{2k-1}-1}{q-1}.$
It is routine to verify that
$\tfrac{q^{2k-1}-1}{q-1} < q\tfrac{q^{2k}-1}{q^2-1},$
which proves the result.
\end{proof}

The next two lemmas consider single-element projections, highlighting the importance of $U_{2,q^2+1}$ and $U_{2,q^2+q+1}$
in Theorems~\ref{main2} and~\ref{main1}.
\begin{lemma}\label{pgframe}
Let $q$ be a prime power and let $e$ be an element of a
matroid $M$ such that $M\del e\cong \PG(r(M)-1,q)$.
Then there is a unique minimal flat $F$ of $M\del e$ that
spans $e$. Moreover, if $r(M)\ge 3$ and $r_M(F)\ge 2$, then 
$M\con e$ contains a $U_{2,q^2+1}$-minor, and
if $r_M(F)\ge 3$, then 
$M\con e$ contains a $U_{2,q^2+q+1}$-minor.
\end{lemma}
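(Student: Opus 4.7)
The plan is to address the three claims in sequence.

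For uniqueness, I would show that if $F_1, F_2$ are flats of $M \del e$ each spanning $e$, then $F_1 \cap F_2$ also spans $e$. Since $M \del e \cong \PG(r(M)-1,q)$ is modular, $(F_1,F_2)$ is a modular pair in $M \del e$, and this modularity lifts to $M$ because $r_M$ and $r_{M\del e}$ agree on subsets of $E(M) \setminus \{e\}$. Combining the modular equality with the submodular inequality applied to $F_1 \cup \{e\}$ and $F_2 \cup \{e\}$, and using $r_M(F_i \cup \{e\}) = r_M(F_i)$, gives $r_M((F_1 \cap F_2) \cup \{e\}) \le r_M(F_1 \cap F_2)$; hence $F_1 \cap F_2$ spans $e$, and uniqueness of the minimal such flat follows at once.

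To prove the $U_{2,q^2+q+1}$-minor claim (which only needs $r_M(F) \ge 3$), I reduce to the case $r_M(F) = 3$ by contracting a rank-$(r_M(F)-3)$ subflat $G \subseteq F$ and passing to a transversal $S$ of the parallel classes of $(M \del e)/G$. Then $M' := (M/G)|(S \cup \{e\})$ satisfies $M' \del e \cong \PG(r(M')-1,q)$, and I would show that the image of $F$ is the unique minimal flat of $M' \del e$ spanning $e$: any strictly smaller such $F^* \subseteq S$ would pull back to a flat $\cl_{M \del e}(G \cup F^*)$ of $M \del e$ of rank less than $r_M(F)$ that spans $e$, contradicting the uniqueness established in part~1. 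Once $r_M(F) = 3$, consider the minor $(M|(F \cup \{e\})) \con e$: minimality of $F$ forces $e \notin \cl_M\{x,y\}$ for every pair $\{x,y\} \subseteq F$ (any such closure would be a rank-$2$ flat spanning $e$), so this rank-$2$ minor is simple on the $|F| = q^2+q+1$ points of $F$, and therefore equals $U_{2,q^2+q+1}$.

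For the $U_{2,q^2+1}$-minor claim, the subcase $r_M(F) \ge 3$ follows immediately from the previous paragraph since $U_{2,q^2+1}$ is a restriction of $U_{2,q^2+q+1}$. The remaining subcase is $r_M(F) = 2$: since $r(M) \ge 3$, pick a rank-$3$ flat $Q$ of $M \del e$ containing $F$, and examine $(M|(Q \cup \{e\})) \con e$. The $q+1$ points of $F$ collapse into one parallel class because $F$ itself spans $e$, while any pair $\{x,y\} \subseteq Q$ with $\cl_{M \del e}\{x,y\} \ne F$ remains non-parallel by uniqueness of $F$. This yields $1 + q^2$ parallel classes in a rank-$2$ matroid, giving $U_{2,q^2+1}$ after simplification.

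The main technical hurdle is the reduction step: I must verify that contracting a subflat of $F$ and simplifying the $e$-free part preserves both the projective-geometry structure of $M' \del e$ and the status of the image of $F$ as the unique minimal flat spanning $e$. The uniqueness from part~1 is essential here, because otherwise an unrelated smaller flat of $M'$ could incidentally span $e$ and break the rank-based descent.
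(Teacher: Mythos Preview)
Your proof is correct and follows essentially the same line as the paper's: modularity of the projective geometry gives uniqueness of $F$, the rank-$2$ case is handled by passing to a rank-$3$ flat $Q \supseteq F$, and the rank-$\ge 3$ case uses minimality of $F$ to see that no two points of $F$ become parallel in $M\con e$. The only cosmetic difference is that where you explicitly contract a subflat $G \subseteq F$ to reduce to $r_M(F)=3$, the paper instead observes that uniqueness of $F$ means $e$ is \emph{freely placed} in $F$, so $(M\con e)|F$ is the truncation of $\PG(r_M(F)-1,q)$, which contains the truncation of $\PG(2,q)$ (namely $U_{2,q^2+q+1}$) as a minor; your reduction and verification that the image of $F$ remains minimal is exactly this truncation argument unpacked.
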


\begin{proof} 
If $F_1$ and $F_2$ are two flats of $M\del e$ that span $e$,
then, since $r_M(F_1\cap F_2) + r_M(F_1\cup F_2) = r_M(F_1) + r_M(F_2)$,
it follows that $F_1\cap F_2$ also spans $e$.
Therefore there is a unique minimal flat $F$ of $M\del e$
that spans $e$.  The uniqueness of $F$ implies that 
$e$ is freely placed in $F$.

Suppose that $r_M(F)\ge 3$. 
Thus $(M\con e)|F$ is the 
truncation of a projective geometry of rank $\ge 3$.
So $M\con e$ contains a truncation of $\PG(2,q)$ as a minor;
therefore $M\con e$ has a $U_{2,q^2+q+1}$-minor.

Now suppose that $r(M)\ge 3$ and that $r_M(F)=2$.
If $F'$ is a rank-$3$ flat of $M\del e$ containing $F$,
then $\varepsilon( (M\con e)|F' ) = q^2 + 1$, so 
$M\con e$ has a $U_{2,q^2+1}$-minor.
\end{proof}

An important consequence is that, if $M$ is a simple matroid with a $\PG(r(M)-1,q)$-restriction $R$ and no $U_{2,q^2+q+1}$-minor, then every $e \in E(M) - E(R)$ is spanned by a unique line of $R$. The next result describes the structure of the projections 
in $\EX(U_{2,q^2+q+1})$.
\begin{lemma}\label{singleproj}
Let $q$ be a prime power, and $M \in \EX(U_{2,q^2+q+1})$ be a simple matroid, and $e
\in E(M)$ be such that $M \del e \cong \PG(r(M)-1,q)$.
If $L$ is the unique line of $M \del e$ that spans $e$, then  $L$ is a 
point of $M\con e$, and each line
of $M \con e$ containing $L$ has $q^2+1$ points and is modular.
\end{lemma}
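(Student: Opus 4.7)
The plan is to pin down $L$, translate lines of $M \con e$ through $L$ into planes of the ambient projective geometry $M\del e \cong \PG(r(M)-1,q)$, and then derive both the cardinality and modularity claims from the fact that every flat of a projective geometry is modular. First I would verify that $L$ is indeed a line of $M \del e$: Lemma~\ref{pgframe} provides a unique minimal flat of $M\del e$ spanning $e$, and the hypothesis $M \in \EX(U_{2,q^2+q+1})$ combined with simplicity of $M$ forces this flat to have rank exactly $2$. Since $e \in \cl_M(L)$, contracting $e$ makes all $q+1$ elements of $L$ parallel in $M\con e$, establishing that $L$ is a point of $M\con e$.

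Next, let $\ell$ be a line of $M \con e$ containing the point $L$. Then $\ell \cup \{e\}$ is a rank-$3$ flat of $M$ containing $L$, and removing $e$ exhibits $\ell$ (regarded as a subset of $E(M \del e)$) as a plane $P$ of $\PG(r(M)-1,q)$ containing $L$. For distinct $x, y \in P$, these elements are parallel in $M \con e$ iff $e \in \cl_M(\{x,y\})$, which by the uniqueness of $L$ as the minimal flat spanning $e$ is equivalent to $\{x,y\} \subseteq L$. So the parallel classes of $\ell$ in $M\con e$ are $L$ (one class) together with singletons from $P \setminus L$, giving $1 + q^2 = q^2 + 1$ points.

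Finally, to show $\ell$ is modular, I would verify $\sqcap_{M\con e}(\ell, F) = r_{M\con e}(\ell \cap F)$ for every flat $F$ of $M\con e$. The key observation is that for any flat $F'$ of $M \con e$ viewed as a subset of $E(M\del e)$, one has $r_{M\con e}(F') = r_{\PG}(F')$ when $L \not\subseteq F'$ and $r_{M\con e}(F') = r_{\PG}(F') - 1$ when $L \subseteq F'$. Writing $\tilde F$ for $F$ viewed in $\PG$, and noting $\ell \cap F = P \cap \tilde F$ as sets, a short case analysis on whether $L \subseteq F$ reduces the desired identity in both cases to the modularity identity $\sqcap_{\PG}(P, \tilde F) = r_{\PG}(P \cap \tilde F)$, which holds since every flat of $\PG(r(M)-1,q)$ is modular. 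The main obstacle is the rank-translation bookkeeping in this last step; conceptually the proof is direct once lines of $M \con e$ through $L$ have been identified with planes of $\PG(r(M)-1,q)$ through $L$.
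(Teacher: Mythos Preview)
Your argument is correct, and for the first two assertions (that $L$ is a point of $M\con e$ and that each line through it has $q^2+1$ points) it matches the paper's reasoning. For modularity, however, the paper takes a different and shorter route: it observes that since $e$ is freely placed on the line $L\cup\{e\}$ of $M$, the matroid $M$ is $\GF(q^2)$-representable (one simply places $e$ at any unused point of the $\GF(q^2)$-line spanned by $L$), hence so is $M\con e$; then any $(q^2+1)$-point line in a $\GF(q^2)$-representable matroid is automatically modular, since it corresponds to a full rank-$2$ subspace in the representation. Your direct rank-transfer argument, using the identity $r_{M\con e}(X)=r_{\PG}(X)-[L\subseteq \cl_{\PG}(X)]$ to pull the modularity of the plane $P$ in $\PG(r(M)-1,q)$ down to the line $\ell$ in $M\con e$, is more elementary and entirely self-contained --- it never invokes representability --- at the cost of the bookkeeping you mention. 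One small remark: in your case analysis, the rank formula you state for flats $F'$ of $M\con e$ in fact holds for arbitrary subsets $X\subseteq E(M\del e)$ once the condition ``$L\subseteq F'$'' is read as ``$L\subseteq \cl_{\PG}(X)$''; this is what you actually need when applying it to $\ell\cup F$ and $\ell\cap F$.
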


\begin{proof}
Let $L'$ be a line of $M\con e$ containing $L$.
Then $L'$ is a plane of $M\del e$, so, by
Lemma~\ref{pgframe}, $L'$ has $q^2+1$ points in $M$.

Note that $e$ is freely placed on the line $L\cup\{e\}$ in $M$.
It follows that $M$ is GF$(q^2)$-representable.
Now $L'$ is a $(q^2+1)$-point line in the GF$(q^2)$-representable
matroid $M\con e$; hence,
$L'$ is modular in $M\con e$.
\end{proof}

\section{Dealing with long lines}

This section contains two lemmas that construct a $U_{2,q^2+q+1}$-minor of a
matroid $M$ with a $\PG(r(M)-1,q)$-restriction $R$ and some additional structure.
\begin{lemma}\label{longlinewin}
Let $q$ be a prime power, and $M$ be a simple matroid of rank at least $7$ such that 
\begin{itemize}
\item $M$ has a $\PG(r(M)-1,q)$-restriction $R$, and 
\item $M$ has a line $L$ containing at least $q^2+2$ points, and 
\item $E(M)\neq E(R) \cup L$,
\end{itemize}
then $M$ has a $U_{2,q^2+q+1}$-minor. 
\end{lemma}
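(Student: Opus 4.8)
The plan is to combine the long line $L$ with a witness element $e\in E(M)-(E(R)\cup L)$ and work inside a projective geometry restriction to manufacture a truncated plane. By Lemma~\ref{pgframe} (applied after a suitable minor is taken to realise $R$ as $M'\del f$ for an added point) we already know that every element outside $R$ is spanned by a unique line of $R$ when there is no $U_{2,q^2+q+1}$-minor; so assume for contradiction that $M$ has no such minor. First I would contract a carefully chosen independent set $I\subseteq E(R)$, of rank roughly $r(M)-3$, so that in $M/I$ the restriction $R/I$ simplifies to a $\PG(2,q)$ while $L$ (or a nontrivial sub-line of it) survives with at least $q^2+2$ points and the witness $e$ survives as well; the rank-$7$ hypothesis is exactly what gives enough room to keep $L$, the plane of $R$ carrying $e$'s defining line, and $e$ itself all present after contracting down to rank $3$. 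The key bookkeeping is that contracting an independent flat of a projective geometry keeps it a projective geometry, and that a line with $\ge q^2+2$ points cannot collapse to a short line under contraction of a set skew to enough of it — this is where Lemma~\ref{skewsubset} (or a direct skew-set argument) gets used to choose $I$ skew to $L$.

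Next, inside the rank-$3$ minor $M_0 := \si(M/I)$ I would argue as follows: $M_0$ has a $\PG(2,q)$-restriction $R_0$ and also a line $L_0$ with at least $q^2+2$ points, and some extra element $e_0$ not on $L_0\cup E(R_0)$ (this is the rank-$3$ incarnation of the hypothesis $E(M)\neq E(R)\cup L$; one must check the third hypothesis is preserved, which is why we drag $e$ along). Now $L_0$ already has $\ge q^2+2$ points, and a plane of $R_0$ is all of $R_0$, which has $q^2+q+1$ points. If $L_0$ together with $e_0$ and $R_0$ spans enough points on some line after one more contraction, we are done; more precisely I would contract a single well-chosen point $p$ of $R_0$ and look at the resulting rank-$2$ matroid: $R_0/p$ contributes $q+1$ points, $L_0$ contributes all but at most one of its $\ge q^2+2$ points (it loses a point only if $p\in L_0$, which we avoid), and $e_0$ contributes a point distinct from these provided $p$ is not on the line through $e_0$ and the relevant point — a counting argument over the $q+1$ choices of $p$ through any fixed point shows such a $p$ exists. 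Then $(M/I/p)$ restricted to the union has at least $(q+1)+(q^2+1)+1 > q^2+q+1$ points on a line after simplification only if these point-sets are suitably disjoint; the clean way is to note $L_0$ alone gives $q^2+2$ and we just need one more point off $L_0$ on the same line of the rank-$2$ minor, which $e_0$ or a point of $R_0$ supplies.

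The main obstacle I anticipate is the disjointness/overlap accounting: after contracting down, the points coming from $L$, from the plane of $R$, and from $e$ may coincide as points of the rank-$2$ (or rank-$3$) minor, so one cannot just add the counts. Handling this requires choosing the contracted set $I$ and the final point $p$ so that $L$, $e$, and a transversal of $R$'s plane remain in "general position" — concretely, so that the line of the minor carrying the $q^2+2$ points of $L$ is not the line that absorbs $e$ or the points of $R$. The rank-$7$ hypothesis is the slack that makes this possible: it lets us first pass to a rank-$4$ or rank-$5$ minor in which $L$, a plane $P_R$ of $R$ with $e\in\cl(P_R)$ but $e\notin P_R$, and a line of $R$ skew to $L$ all coexist, and only then truncate. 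I expect the proof to proceed by: (i) reducing to the case $r(M)=7$ or slightly more by repeatedly contracting points of $R$ not on $L$ and not equal to the point defining $e$, using Lemma~\ref{pgframe} to preserve the "no long line forces nothing" setup and Lemma~\ref{skewsubset} to keep $L$ long; (ii) in the bounded-rank case, an explicit construction of the $U_{2,q^2+q+1}$-minor by one more contraction plus restriction, with the overlap check done by a pigeonhole over the bundle of lines through a point of $\PG(2,q)$.

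\medskip

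\noindent\textit{Remark.} If the intended argument instead routes through Lemma~\ref{singleproj} — i.e.\ first showing $e$ behaves like a projection point so that $M\con e$ has only modular $(q^2+1)$-point lines, and then deriving a contradiction with the presence of the longer line $L$ — then step (ii) above would be replaced by: contract to rank $3$, observe $M\con e$ restricted to the relevant plane is a truncation of $\PG(3,q)$ hence has a $U_{2,q^2+q+1}$-minor directly, the role of $L$ being to force $e$ into a rank-$\ge 3$ flat in the sense of Lemma~\ref{pgframe}. The hard part is the same: keeping $L$ long and $e$ "generic" while driving the rank down to where Lemma~\ref{pgframe}'s rank-$\ge 3$ conclusion can be invoked.
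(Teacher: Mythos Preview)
Your main approach has a genuine counting gap. Suppose you succeed in producing a rank-$3$ minor $M_0$ with a $\PG(2,q)$-restriction $R_0$, a line $L_0$ with at least $q^2+2$ points, and an extra element $e_0$. You then contract a point $p\in R_0\setminus L_0$ and count points in rank~$2$. Since $p\notin L_0$, the points of $L_0$ remain pairwise non-parallel, so $L_0$ contributes exactly $|L_0|\ge q^2+2$ points. The element $e_0$ contributes at most one further point. You assert that this, or a point of $R_0$, gives enough --- but $q^2+3 < q^2+q+1$ once $q\ge 3$, so ``one more point'' is not sufficient. As for $R_0$: each of the $q+1$ parallel classes of $R_0/p$ may well coincide with a point of $L_0$ (in rank~$3$ any two lines can meet), so $R_0$ need not contribute anything new. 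The pigeonhole over lines through $p$ that you sketch does not close this $q-2$ shortfall.

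There is also a reduction problem. To go down to rank~$3$ while keeping $L$ long, the restriction $R/I$ projective, and $e_0\notin R_0\cup L_0$, you need the contracted set $I$ to be skew in $R$ to the minimal flat $F$ of $R$ spanning $L\cup\{e\}$. But $r(F)$ can be as large as~$6$ (two points spanning $L$ and the witness $e$ each sit on a line of $R$, by Lemma~\ref{pgframe}), so rank~$7$ leaves no room for an $I$ of rank $r(M)-3$ that is skew to $F$.

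The paper's argument avoids both issues by staying in rank~$4$ rather than rank~$3$. One contracts within $F$ until $r(F)=3$ and $r(M)=4$, then contracts the witness $z$ first. By Lemma~\ref{singleproj}, in $(M/z)|R$ the plane $F$ becomes a \emph{modular} $(q^2+1)$-point line. The long line $L$ survives in $M/z$ with $\ge q^2+2$ points, all spanned by $F$, so there is a point $e\in\cl_{M/z}(F)$ not parallel to any element of $F$; modularity forces $e$ to be freely placed on that line in $(M/z)|(R\cup\{e\})$. Contracting $e$ then drops only $q^2$ points from $(M/z)|R$, leaving $1+q^2(q+1)-q^2=q^3+1$ points in rank~$2$ --- comfortably exceeding $q^2+q$. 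The two-step contraction (first $z$, to gain modularity; then a point of $L$, now freely placed) is the idea your plan is missing; your Remark gestures toward Lemma~\ref{singleproj} but inverts the roles of $L$ and the witness and still targets rank~$3$.
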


\begin{proof}
We may assume that $E(M)=E(R)\cup E(L)\cup \{z\}$ where
$z\not\in E(M)\cup E(R)$.
Let $F$ be the minimal flat of $R$ that spans $L\cup \{z\}$.
It follows easily from 
Lemma~\ref{pgframe}, that either $M$ has a $U_{2,q^2+q+1}$-minor
or $r_M(F)\le 6$.
To simplify the proof we will relax the condition
that $r(M)\ge 7$ to $r(M)\ge 1+r_M(F)$,
and we will suppose that $(M,R)$ form a minimum rank
counterexample under these weakened hypotheses. 

Let $L_z$ denote the 
line of $R$ that spans $z$ in $M$.
Since $z\not\in L$, we have $r_M(L\cup L_z) \ge 3$.
We may assume that $r_M(L\cup L_z)= 3$, since otherwise
we could contract a point in $F-(L\cup L_z)$ to
obtain a smaller counterexample.
Simlarly, we may assume that $r_M(F)=3$ and $r(M) = 4$, 
as otherwise
we could contract an element of $F-\cl_M(L\cup L_z)$ or $E(M) - \cl_M(F)$.

By Lemma~\ref{singleproj}, $L_z$ is a point of 
$(M\con z)|R$ and each line of 
$(M\con z)|R$ is modular and has $q^2+1$ points.
One of these lines is $F$, and, since $F$ spans $L$,
$F$ spans a line with $q^2+2$ points in $M\con z$.
Let $e\in cl_{M\con z}(F)$ be an element that
is not in parallel with any element of $F$.
Since $F$ is a modular line in $(M\con z)|R$,
the point $e$ is freely placed on the line $F\cup \{e\}$
in $(M\con z)|(R\cup \{e\})$.
Therefore $\varepsilon(M\con \{e,z\}) \ge \varepsilon((M\con \{z\})|R) - q^2
= 1 + q^2(q+1) - q^2 = q^3+1$, contradicting the fact that $M\in \EX(U_{2,q^2+q+1})$.
\end{proof}

\begin{lemma}\label{longlinewin2}
Let $q$ be a prime power, and $k \ge 3$ be an integer. If $M$ is a matroid of
rank at least $k+7$, with a $PG(r(M)-1,q)$-restriction, and a
set $X \subseteq E(M)$ with $r_M(X) \le k$ and  $\epsilon(M|X) > \frac{q^{2k}-1}{q^2-1}$,
then $M$ has a $U_{2,q^2+q+1}$-minor. 
\end{lemma}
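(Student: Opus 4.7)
The plan is to induct on $k \ge 3$, using Lemma~\ref{longlinewin} as the main tool and reducing $k$ by one-element contractions. A preliminary observation: since a rank-$k$ flat of the given $\PG(r(M)-1,q)$-restriction $R$ has only $\tfrac{q^k-1}{q-1}$ points, the hypothesis $\epsilon(M|X) > \tfrac{q^{2k}-1}{q^2-1}$ forces $X \not\subseteq E(R)$ for all $k \ge 2$, so some $e \in X - E(R)$ is always available.

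The key dichotomy is whether $M|X$ itself contains a line with at least $q^2+2$ points. If it does, I apply Lemma~\ref{longlinewin} to $M$ directly, with the given restriction $R$ and this line $L$ (the rank condition is immediate from $r(M) \ge k+7 \ge 10$). To verify the side condition $E(M) \ne E(R) \cup L$, I may assume $M \in \EX(U_{2,q^2+q+1})$, for otherwise the conclusion already holds; then $|L| \le q^2+q$. Under $E(M) = E(R) \cup L$ one obtains $|X| \le \tfrac{q^k-1}{q-1} + (q^2+q)$, which contradicts $|X| \ge \epsilon(M|X) > \tfrac{q^{2k}-1}{q^2-1}$ by a routine calculation valid for $k \ge 3$.

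Otherwise every line of $M|X$ has at most $q^2+1$ points. Fix some $e \in X - E(R)$ and let $F_e$ be the minimal flat of $R$ spanning $e$; by Lemma~\ref{pgframe}, $r_R(F_e) \ge 3$ would give a $U_{2,q^2+q+1}$-minor immediately, so I may assume $r_R(F_e) \le 2$. Set $N := M/e$. Then $r(N) \ge (k-1)+7$, and the observations in Section~\ref{projsection} applied to the one-element projection $(M|(E(R) \cup \{e\}))/e$ of $R$ provide a $\PG(r(N)-1,q)$-restriction $R'$ of $N$. Since each line of $M|X$ through $e$ has at most $q^2+1$ points, it contributes at most $q^2$ non-$e$ elements to a parallel class in $(M|X)/e$, giving $\epsilon_N(X-e) \ge \tfrac{\epsilon(M|X)-1}{q^2} > \tfrac{q^{2(k-1)}-1}{q^2-1}$, while $r_N(X-e) \le k-1$. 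For $k \ge 4$ these are precisely the hypotheses of the lemma at parameter $k-1$, and induction closes the case.

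The base $k = 3$ of this second branch requires a direct application: $X-e$ now spans a line of $N$ with at least $q^2+2$ points, so I apply Lemma~\ref{longlinewin} to $N$, $R'$, and this line. The side condition $E(N) \ne E(R') \cup L$ is the main technical hurdle: Lemma~\ref{pgframe} with $r_R(F_e) \le 2$ shows that contracting $e$ merges at most the $q+1$ points of $F_e$, giving $\epsilon(N) \ge |R|-q$, whereas $E(N) = E(R') \cup L$ would impose $\epsilon(N) \le |R'|+(q^2+q)$, which forces $q^{r(M)-1} \le q^2+2q$, impossible for $r(M) \ge 10$. The recurring obstacle throughout is establishing this ``outside element'' side condition each time Lemma~\ref{longlinewin} is invoked, and it is precisely here that the rank bound $r(M) \ge k+7$ is used.
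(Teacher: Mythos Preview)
Your proof is correct, but it takes a genuinely different route from the paper's.

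The paper argues directly: after enlarging $X$ so that $r_M(X)=k$ and observing (via Lemma~\ref{pgframe}) that $X$ lies in a rank-$\le 2k$ flat of $R$, it contracts up to $k$ points of $R$ so that the flat $F$ of $R$ spanning $X$ has rank exactly $k$. Then a global counting argument over the $\binom{|F|}{2}/\binom{q+1}{2}$ lines of $R|F$ shows that if no line of $M|X$ had $q^2+2$ points, one would get $|X|\le \tfrac{q^{2k}-1}{q^2-1}$; so a long line $L$ exists inside $X$, and a short calculation gives $X\ne F\cup L$, allowing a single application of Lemma~\ref{longlinewin} to $M|(E(R)\cup X)$.

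Your approach replaces that line-counting step with an inductive descent on $k$: either a long line is already present in $M|X$ (and you invoke Lemma~\ref{longlinewin} immediately), or every line through a chosen $e\in X\setminus E(R)$ is short, and contracting $e$ drops both $k$ and the density threshold by one step via the identity $\tfrac{1}{q^2}\bigl(\tfrac{q^{2k}-1}{q^2-1}-1\bigr)=\tfrac{q^{2(k-1)}-1}{q^2-1}$. The Section~\ref{projsection} observations supply the new spanning projective geometry in $M/e$, and the base case $k=3$ is a second direct appeal to Lemma~\ref{longlinewin}, this time in $M/e$. What your argument buys is that you never need to align $F$ with $\cl_M(X)$ or perform the pigeonhole count over sub-$\PG$ lines; the cost is the separate treatment of $k=3$ and two verifications of the ``outside element'' side condition rather than one. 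The paper's approach, by contrast, makes the source of the threshold $\tfrac{q^{2k}-1}{q^2-1}$ transparent as an extremal count, and needs only one invocation of Lemma~\ref{longlinewin}.

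One small point worth making explicit in your write-up: you should state at the outset that you pass to a simplification of $M$ in which $E(R)$ and a transversal of the points of $M|X$ are retained; this is what guarantees both that your element $e\in X\setminus E(R)$ is not parallel to a point of $R$ (so $r_R(F_e)=2$ exactly), and that the cardinality bound $|X|\le \tfrac{q^k-1}{q-1}+(q^2+q)$ in your Case~1 is legitimate.
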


\begin{proof}
Let $R$ be a $\PG(r(M)-1,q)$-restriction of $M$. By choosing a rank-$k$ set
containing $X$, we may assume that $r_M(X) = k$. By Lemma~\ref{pgframe}, 
there is a flat $F$ of rank at most $2k$
such that $X \subseteq F$. By contracting at most $k$ points in $F - \cl_M(X)$
if this is not the case, we may assume that $r_M(F) = r_M(X)$, at the cost of
relaxing our lower bound on $r(M)$ to $r(M) \ge 7$. 

We may assume that $M$ is simple, and that $X$ is a flat of $M$, so $F
\subseteq X$. Let $n = |F| = \frac{q^k-1}{q-1}$. By Lemma~\ref{pgframe}, each
point of $X$ is spanned in $M$ by a line of $R|F$. There are
$\binom{n}{2}/\binom{q+1}{2}$ such lines, each containing $q+1$ points of $F$.
If each of these lines spans at most $(q^2-q)$ points of $X - F$, then \[|X| =
|F| + |X-F| \le \frac{q^k-1}{q-1} + \frac{(q^2-q)\binom{n}{2}}{\binom{q+1}{2}}
= \frac{q^{2k}-1}{q^2-1},\] contradicting definition of $X$. Therefore, some
line $L$ of $M|X$ contains at least $q^2+2$ points. We also have $|L| \le
q^2+q$, so a calculation gives $|X-L| > \frac{q^{2k}-1}{q^2-1} - (q^2+q) \ge
\frac{q^k-1}{q-1} = |F|$, so $X \ne F \cup L$. Applying Lemma~\ref{longlinewin}
to $M|(E(R) \cup X)$ gives the result. 
\end{proof}

\section{Matchings and unstable sets}

For an integer $k \ge 0$, a \emph{$k$-matching} of a matroid $M$ is a mutually
skew $k$-set of lines of $M$. Our first theorem was proved in [\ref{sqf}], and
also follows routinely from the much more general linear matroid matching
theorem of Lov\'asz~[\ref{lovasz}]:

\begin{theorem}\label{pgmatching} 
There is an integer-valued function $f_{\ref{pgmatching}}(q,k)$ so that, for
any prime power $q$ and integers $n \ge 1$ and $k \ge 0$, if $\cL$
is a set of lines in a matroid $M \cong \PG(n-1,q)$, then either
\begin{enumerate}[(i)] 
\item $\cL$ contains a $(k+1)$-matching of $M$, or 
\item there is a flat $F$ of $M$ with $r_M(F) \le k$, and a set $\cL_0
\subseteq \cL$ with $|\cL_0| \le f_{\ref{pgmatching}}(q,k)$, such that every
line $L \in \cL$ either intersects $F$, or is in $\cL_0$.
Moreover, if $r_M(F) = k$, then $\cL_0 = \varnothing$.  
\end{enumerate}
\end{theorem}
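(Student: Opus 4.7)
I would argue by induction on $k$, taking $f_{\ref{pgmatching}}(q,k)$ to be a sufficiently large increasing function of $k$. The base case $k=0$ is immediate: if $\cL = \varnothing$, set $F = \varnothing$ and $\cL_0 = \varnothing$; otherwise any single line of $\cL$ is a $1$-matching.

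For the inductive step, assume the theorem at all levels below $k$, and suppose $\cL$ has no $(k+1)$-matching. First, if $\cL$ has no $k$-matching either, apply the inductive hypothesis at level $k-1$ to obtain $(F,\cL_0)$ with $r_M(F)\le k-1<k$ and $|\cL_0|\le f_{\ref{pgmatching}}(q,k-1)$; since $r_M(F)<k$, the moreover clause at level $k$ is vacuous and we are done. So we may assume $\cL$ contains a maximum matching $\cL^* = \{L_1,\dots,L_k\}$, and set $G = \cl_M(L_1 \cup \dots \cup L_k) \cong \PG(2k-1,q)$. By maximality of $\cL^*$, every $L \in \cL$ meets $G$. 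The lines of $\cL$ contained in $G$ number at most some constant $g(q,k)$ (the total number of lines in $\PG(2k-1,q)$) and can be absorbed into $\cL_0$; each remaining line $L$ meets $G$ at a single point $\pi(L)$.

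The remaining task --- and the main obstacle I anticipate --- is to find a subflat $F \subseteq G$ with $r_M(F)\le k$ such that $\pi(L) \in F$ for every remaining $L$ except boundedly many, and such that no exceptions are required when $r_M(F)=k$. My preferred route is to appeal to the linear matroid matching theorem of Lov\'asz~[\ref{lovasz}]: applied to the configuration $\cL$ inside $M$, Lov\'asz's min-max duality produces, as a certificate of maximum matching size $k$, precisely a flat-plus-exceptional-lines decomposition of the type sought, with the deficiency-zero (``tight'') case yielding $r_M(F)=k$ and $\cL_0=\varnothing$, and any deficiency being bounded by a function of $q$ and $k$. A combinatorial alternative, carried out in~[\ref{sqf}], is to argue by contracting a well-chosen element $e$ of $L_1$ and working in $M/e \cong \PG(n-2,q)$: either the matching number of the contracted configuration drops by one (allowing the inductive hypothesis at level $k-1$ to finish the proof after lifting the resulting flat through $e$), or the lines through $e$ contribute only boundedly many exceptions that can be absorbed into $\cL_0$.
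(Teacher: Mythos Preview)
The paper does not actually give a proof of this theorem: it simply records that the result was proved in~[\ref{sqf}] and that it ``also follows routinely from the much more general linear matroid matching theorem of Lov\'asz~[\ref{lovasz}].'' Your proposal is entirely consistent with this --- you set up the natural induction on $k$ and the reduction via a maximum matching to the span $G$, and then, for the substantive step, you defer to exactly the same two sources the paper cites; so your treatment is at least as detailed as the paper's, and invokes the same external inputs.
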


We now define a property in terms of a matching in a spanning projective
geometry. Let $q$ be a prime power, $M \in \EX(U_{2,q^2+q+1})$ be a simple matroid with a
$\PG(r(M)-1,q)$-restriction $R$, and $X \subseteq E(M \del R)$ be a set such
that $M|(E(R) \cup X)$ is simple. Recall that, by Lemma~\ref{pgframe}, each $x
\in X$ lies in the closure of exactly one line $L_x$ of $R$. We say that $X$ is
\emph{$R$-unstable} in $M$ if the lines $\{L_x: x \in X\}$ are a matching of
size $|X|$ in $R$. 

\begin{lemma}\label{findunstable} 
There is an integer-valued function $f_{\ref{findunstable}}(q,k)$ so that, for
any prime power $q$ and integer $k \ge 0$, if $M \in \EX(U_{2,q^2+q+1})$ is a matroid
of rank at least $3$ with a $\PG(r(M)-1,q)$-restriction $R$, then either 
\begin{enumerate}[(i)] 
\item there is an $R$-unstable set of  size $k+1$ in $M$, or

\item $R$ has a flat $F$ with rank at most $k$ such that
$\epsilon(M
\con F) \le \epsilon(R \con F) + f_{\ref{findunstable}}(q,k)$.  
\end{enumerate}
\end{lemma}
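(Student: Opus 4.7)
My plan is to apply Theorem~\ref{pgmatching} to the family $\cL$ of lines of $R$ spanned by elements of $E(M)\setminus E(R)$. After replacing $M$ by $\si(M)$ we may assume $M$ is simple, since this preserves both the hypotheses and both conclusions. By the consequence of Lemma~\ref{pgframe} recorded in the text, each $x\in E(M)\setminus E(R)$ is spanned by a unique line $L_x$ of $R$. Set $\cL=\{L_x:x\in E(M)\setminus E(R)\}$ and apply Theorem~\ref{pgmatching} to $\cL$ in $R$ with parameter $k$.

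If Theorem~\ref{pgmatching} returns a $(k+1)$-matching $\{L_1,\dots,L_{k+1}\}\subseteq\cL$, then choosing $x_i\in E(M)\setminus E(R)$ with $L_{x_i}=L_i$ produces a $(k+1)$-element set $X=\{x_1,\dots,x_{k+1}\}$ whose associated line family $\{L_x:x\in X\}=\{L_1,\dots,L_{k+1}\}$ is a matching of size $|X|$. This is precisely the definition of an $R$-unstable set, yielding conclusion~(i).

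In the alternative case, we obtain a flat $F$ of $R$ with $r_R(F)\le k$ and a set $\cL_0\subseteq\cL$ with $|\cL_0|\le f_{\ref{pgmatching}}(q,k)$ such that every line in $\cL$ either meets $F$ or lies in $\cL_0$. To verify conclusion~(ii), I count the contribution of each $x\in E(M)\setminus E(R)$ to $\elem(M\con F)-\elem(R\con F)$. If $L_x\subseteq F$, then $x\in\cl_M(F)$ is a loop of $M\con F$. If $L_x$ meets $F$ in exactly one point, then for any $y\in L_x\setminus F$ we have $x\in\cl_M(L_x)\subseteq\cl_M(F\cup\{y\})$, so $x$ is parallel in $M\con F$ to the element $y\in R\setminus F$; neither subcase contributes a new point. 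The remaining case is $L_x\cap F=\emptyset$, which forces $L_x\in\cL_0$. For each line $L\in\cL_0$, the flat $\cl_M(L)$ has at most $q^2+q$ points since $M\in\EX(U_{2,q^2+q+1})$, and already contains the $q+1$ points of $L\subseteq R$; hence at most $q^2-1$ elements of $E(M)\setminus E(R)$ have $L_x=L$. Summing over $\cL_0$ gives
\[\elem(M\con F)\;\le\;\elem(R\con F)+(q^2-1)f_{\ref{pgmatching}}(q,k),\]
so conclusion~(ii) holds with $f_{\ref{findunstable}}(q,k):=(q^2-1)f_{\ref{pgmatching}}(q,k)$.

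The only substantive step is the density bound in the second case; everything else is a direct application of Lemma~\ref{pgframe} and Theorem~\ref{pgmatching}, combined with the elementary fact that a line in a $U_{2,q^2+q+1}$-free matroid contains at most $q^2+q$ points.
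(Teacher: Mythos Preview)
Your proof is correct and follows essentially the same route as the paper: apply Theorem~\ref{pgmatching} to the collection of lines of $R$ that span a point outside $R$, extract an $R$-unstable set from a $(k+1)$-matching, and otherwise bound the excess of $\elem(M\con F)$ over $\elem(R\con F)$ by the contribution of the exceptional lines in $\cL_0$. Your case analysis on how $L_x$ meets $F$ is slightly more explicit than the paper's, and yields the marginally sharper constant $(q^2-1)f_{\ref{pgmatching}}(q,k)$ in place of the paper's $(q^2+q)f_{\ref{pgmatching}}(q,k)$, but the argument is the same.
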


\begin{proof} 
Let $q$ be a prime power, and $k \ge 0$ be an integer. Set
$f_{\ref{findunstable}}(q,k) = (q^2+q)f_{\ref{pgmatching}}(q,k)$. Let $M$ be a
matroid with a $\PG(r(M)-1,q)$-restriction $R$. We may assume that $M$ is
simple, and that the first outcome does not hold. Let $\cL$ be the set of lines
$L$ of $R$ such that $|\cl_M(L)| > |\cl_R(L)|$. If $\cL$ contains a
$(k+1)$-matching of $R$, then choosing a point from $\cl_M(L) - \cl_R(L)$ for
each line $L$ in the matching gives an $R$-unstable set of size $k+1$. We may
therefore assume that $\cL$ contains no such matching. Thus, let $F$ and
$\cL_0$ be the sets defined in the second outcome of Theorem~\ref{pgmatching}.
Let $D = \cup_{L \in \cL_0} L$. We have $|D| \le (q^2+q)|\cL_0|
\le f_{\ref{findunstable}}(q,k)$. By Lemma~\ref{pgframe}, each point of $M \del
(E(R) \cup D)$ lies in the closure of a line in $\cL$, so is parallel to a
point of $R$ in $M \con F$. Therefore, $\elem((M \con F) \del E(R)) \le
\elem((M \con F)| D)$; the result now follows.  
\end{proof}

We use an unstable set to construct a dense minor.
Recall that $(q,k)$-full and $(q,k)$-overfull were defined 
at the start of Section 2.
\begin{lemma}\label{contractunstable} 
Let $q$ be a prime power, and $k \ge 1$ and $n > k$ be integers. If $M \in
\EX(U_{2,q^2+q+1})$ is a matroid of rank at least $n+k$ with a
$\PG(r(M)-1,q)$-restriction $R$, and $X$ is an $R$-unstable set of size $k$ in
$M$, then $M$ has a rank-$n$ $(q,k)$-full minor $N$ with a
$U_{2,q^2+1}$-restriction.  
\end{lemma}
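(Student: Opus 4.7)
The plan is to construct $N$ as $\si((M/X)|H')$, where $H'$ is a rank-$(n+k)$ flat of $R$ chosen to contain the rank-$2k$ flat $\cl_R(\bigcup_{i=1}^k L_{x_i})$; such an $H'$ exists because $r(R) = r(M) \ge n+k$ and $n \ge k$. We may assume without loss of generality that $E(M) = E(R) \cup X$.

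The heart of the argument is an analysis of the parallel classes of $(M/X)|R$. Since the lines $L_{x_i}$ are mutually skew in $R$ and $M$ is simple, the flats $\cl_M(L_{x_i})$ are mutually skew in $M$ and have pairwise empty intersection; combined with $x_i \in \cl_M(L_{x_i})$ this forces $X$ to be independent. A local-connectivity calculation gives $\sqcap_M(X, \cl_M(L_{x_i})) = 1$, pinning down $\cl_M(X) \cap \cl_M(L_{x_i}) = \{x_i\}$ and hence $\cl_M(X) \cap \bigcup_i L_{x_i} = \varnothing$. From this one verifies that the parallel classes of $(M/X)|R$ are precisely the $k$ sets $L_{x_i}$ together with singletons for every non-loop point of $R$; the loops are the points of $\cl_M(X) \cap R$, a set contained in a flat of $R$ of rank at most $k$. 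Using $r_M(X) = k$ and $X \subseteq \cl_M(H')$, we obtain $r(N) = r_{M/X}(H') = n$ and $\elem(N) \ge |H'| - kq - \tfrac{q^k-1}{q-1}$. A routine inequality using $\tfrac{q^{2k}-1}{q^2-1} = 1 + q^2 + \cdots + q^{2k-2}$ then shows $\elem(N) \ge \tfrac{q^{n+k}-1}{q-1} - q\tfrac{q^{2k}-1}{q^2-1}$ for every $k \ge 1$, so $N$ is $(q,k)$-full.

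For the $U_{2,q^2+1}$-restriction I pick $y \in H' - \cl_R(\bigcup_i L_{x_i})$, which exists since $r_R(H') = n+k > 2k$, and let $P = \cl_R(L_{x_1} \cup \{y\})$. A short rank count in $R$ shows $P \cap \cl_R(\bigcup_i L_{x_i}) = L_{x_1}$, so $P$ is skew in $R$ to $\cl_R(\bigcup_{j \ne 1} L_{x_j})$; hence $r_{M/X}(P) = 2$, $(M/X)|P$ has no loops, and the parallel-class analysis yields $\si((M/X)|P) \cong U_{2,q^2+1}$, which is a restriction of $N$. The main obstacle will be the parallel-class bookkeeping inside $(M/X)|R$: one must rule out any merging of points beyond the forced collapse of each $L_{x_i}$, and control the size of $\cl_M(X) \cap R$ tightly enough that the $(q,k)$-full inequality survives the loss from potential loops.
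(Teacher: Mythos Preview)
Your proposal has a genuine gap, and you have in fact correctly named it in your last paragraph without resolving it. The parallel-class claim—that in $(M/X)|R$ the only non-singleton classes among the non-loops are the sets $L_{x_i}$—is \emph{false} whenever $\cl_M(X)\cap E(R)$ is nonempty. Indeed, if $p\in \cl_M(X)\cap E(R)$, then for every $e\in E(R)$ with $e\notin\cl_M(X)$ the entire line $\cl_R(\{e,p\})$ has rank~$1$ in $M/X$, so its $q$ non-loop points form a parallel class not equal to any $L_{x_i}$. Thus a single loop produces an enormous number of additional collapses, and your count $|H'|-kq-\tfrac{q^k-1}{q-1}$ is not valid. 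Separately, your ``routine inequality'' $kq+\tfrac{q^k-1}{q-1}\le q\tfrac{q^{2k}-1}{q^2-1}$ is simply false at $k=1$ (the left side is $q+1$, the right side is $q$), so even on its own terms the estimate would not give $(q,1)$-fullness.

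What is actually needed is the stronger fact $\cl_M(X)\cap E(R)=\varnothing$, not merely $\cl_M(X)\cap\bigcup_i L_{x_i}=\varnothing$. The paper obtains this by a short induction on $k$: one contracts $L_{x_1}$, observes that $R'=\si(R/L_{x_1})\cong\PG(n+k-3,q)$ is a spanning restriction of $M/L_{x_1}$ and that $X\setminus\{x_1\}$ is $R'$-unstable, and applies the inductive hypothesis to conclude that $\cl_{M/L_{x_1}}(X\setminus\{x_1\})$ misses $R/L_{x_1}$; this lifts to $\cl_M(X)\cap E(R)=\varnothing$. With no loops established, your direct count becomes $\elem(N)\ge |H'|-kq$, and now $kq\le q\tfrac{q^{2k}-1}{q^2-1}$ does hold for all $k\ge 1$, giving $(q,k)$-fullness. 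The paper instead quotes Lemma~\ref{contractdisjointflat} once $X$ is known to be a flat disjoint from $R$; your direct count (after the fix) is a legitimate alternative and in fact yields a sharper estimate. Your construction of the $U_{2,q^2+1}$-restriction via a plane through $L_{x_1}$ skew to the remaining $L_{x_j}$ is essentially the paper's argument.
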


\begin{proof} 
We may assume by taking a restriction if necessary that $r(M) = n+k$, and $E(M)
= E(R) \cup X$; we show that $N = M \con X$ has the required properties.
For each $x\in X$, let $L_x$ denote the line of $R$ that spans $X$;
thus $\{L_x\, : \, x\in X\}$ is a matching.  By the
definition of instability, it is clear that $X$ is independent, so $r(N) = n$.
Let $x \in X$, and $P$
be a plane of $R$ that contains $L_x$ and is skew to $X-\{x\}$. By
Lemma~\ref{singleproj}, $(M \con x)|P$
has a $U_{2,q^2+1}$-restriction.  Since $X-\{x\}$ is skew to $P$, $M \con X$
also has a $U_{2,q^2+1}$-restriction.

To complete the proof it is enough, by
Lemma~\ref{contractdisjointflat}, to show that $\cl_M(X)$ is disjoint
from $R$. This is trivial if $X$ is empty, so consider $x\in X$ and let
$R'=\si(R\con L_x)$.  Note that $R'\cong \PG(n+k-3,q)$ is a spanning 
restriction of $M\con L_x$ and $X-\{x\}$ is $R'$-unstable.
Inductively, we may assume that $\cl_{M\con L_x}(X-\{x\})$
is disjoint from $R\con L_x$, but this implies that $\cl_M(X)$
is disjoint from $R$, as required.
\end{proof}

\section{The spanning case}

In this section we consider matroids that are spanned by a projective geometry.
\begin{lemma}\label{spanningwin}
There is an integer-valued function $f_{\ref{spanningwin}}(n,q,k)$ such that,
for any prime power $q$ and integers $k \ge 0$ and $n > k+1$, if $M \in
\EX(U_{2,q^2+q+1})$ is a matroid of rank at least $f_{\ref{spanningwin}}(n,q,k)$ such
that 
\begin{itemize}
\item $M$ has a $\PG(r(M)-1,q)$-restriction $R$, and 
\item $M$ is $(q,k)$-overfull,
\end{itemize}
then $M$ has a rank-$n$ $(q,k+1)$-full minor $N$ 
with a $U_{2,q^2+1}$-restriction. 
\end{lemma}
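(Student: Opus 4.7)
The plan is to apply Lemma~\ref{findunstable} with parameter $k$, which returns either an $R$-unstable set $X$ of size $k+1$, or a flat $F$ of $R$ with $r_R(F) = r_F \le k$ satisfying $\varepsilon(M \con F) \le \varepsilon(R \con F) + f_{\ref{findunstable}}(q,k)$. In the unstable-set case we are done immediately: choosing $f_{\ref{spanningwin}}(n,q,k)$ large enough that $r(M) \ge n+k+1$, Lemma~\ref{contractunstable} (with $k+1$ in place of $k$) applied to $X$ produces a rank-$n$ $(q,k+1)$-full minor of $M$ with a $U_{2,q^2+1}$-restriction, which is exactly what is required.

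In the flat case we will derive a contradiction with the $(q,k)$-overfullness of $M$. By Lemma~\ref{pgframe}, every $e \in E(M) \setminus E(R)$ lies in the closure of a unique line $L_e$ of $R$, and we partition these extras into type~$a$ (those with $L_e \subseteq F$), type~$b$ (those with $|L_e \cap F| = 1$), and type~$c$ (those with $L_e$ skew to $F$), so $\varepsilon(M) = \varepsilon(R) + a + b + c$. Applying Lemma~\ref{longlinewin2} to the rank-$r_F$ flat $\cl_M(F)$ (or arguing directly for $r_F \le 2$) yields $a \le \frac{q^{2r_F}-1}{q^2-1} - \frac{q^{r_F}-1}{q-1}$; counting lines inside each of the $\varepsilon(R \con F)$ rank-$(r_F+1)$ flats of $R$ that contain $F$, together with the no-long-line bound of Lemma~\ref{longlinewin} that every line has at most $q^2+1$ points (the degenerate configuration $E(M) = E(R) \cup L$ being excluded by overfullness for $k \ge 1$, and handled by the unstable-set case for $k=0$), gives $b \le \varepsilon(R \con F) \cdot q^{r_F}(q^{r_F}-1)$; and $c \le (q^2-q) \cdot f_{\ref{pgmatching}}(q,k)$ is bounded by a constant depending only on $q$ and $k$, since each type-$c$ extra lies on a line of the set $\cL_0$ from Lemma~\ref{findunstable}.

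The heart of the argument is the algebraic identity
\[ \varepsilon(R) + \left(\frac{q^{2r_F}-1}{q^2-1} - \frac{q^{r_F}-1}{q-1}\right) + \varepsilon(R \con F) \cdot q^{r_F}(q^{r_F}-1) \;=\; \frac{q^{r(M)+r_F}-1}{q-1} - q\cdot\frac{q^{2r_F}-1}{q^2-1}, \]
whose right-hand side is precisely $T_{r_F}$, the $(q,r_F)$-full threshold. Combined with the bounds above, this gives $\varepsilon(M) \le T_{r_F} + c$. When $r_F = k$, Lemma~\ref{findunstable} additionally guarantees $\cL_0 = \varnothing$ so $c = 0$, and hence $\varepsilon(M) \le T_k$, contradicting the strict $(q,k)$-overfullness directly. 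When $r_F < k$, the gap $T_k - T_{r_F}$ grows like $q^{r(M)+r_F}$ as $r(M) \to \infty$, so choosing $f_{\ref{spanningwin}}(n,q,k)$ large enough forces $T_{r_F} + c < T_k$, again yielding a contradiction.

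The main obstacle is verifying the displayed identity, which is a routine but delicate algebraic simplification reflecting the fact that the combinatorial maximum for $b$ (arising from lines meeting $F$ in a single point inside each $F$-hyperflat of $R$) matches exactly the ``extra density'' budget of a $(q,r_F)$-full extension of $\PG(r(M)-1,q)$. Care is also needed for the small-$r_F$ cases $r_F \in \{0,1,2\}$, where Lemma~\ref{longlinewin2} does not formally apply (it requires rank $\ge 3$) but the corresponding bounds still hold by direct inspection: $a = 0$ for $r_F \in \{0,1\}$ trivially, and $a \le q^2-q$ for $r_F = 2$ from the no-long-line bound on $\cl_M(F)$.
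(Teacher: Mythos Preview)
Your argument is correct and is essentially the paper's own proof: both obtain an $R$-unstable $(k{+}1)$-set (and finish via Lemma~\ref{contractunstable}) or, failing that, use the flat $F$ and exceptional set $\cL_0$ from Theorem~\ref{pgmatching}, bound the extra points in the three categories $\cl_M(F)$, lines meeting $F$, and lines in $\cL_0$ (using Lemmas~\ref{longlinewin} and~\ref{longlinewin2} for the first two), and arrive at exactly the displayed identity to contradict $(q,k)$-overfullness, splitting on $r_F=k$ versus $r_F<k$.

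The one presentational wrinkle worth fixing: you invoke Lemma~\ref{findunstable} and then appeal to ``the set $\cL_0$ from Lemma~\ref{findunstable}'' together with the clause ``$\cL_0=\varnothing$ when $r_F=k$'', but neither $\cL_0$ nor that clause appears in the \emph{statement} of Lemma~\ref{findunstable}; they live in Theorem~\ref{pgmatching}. The paper simply applies Theorem~\ref{pgmatching} directly to the set $\cL^+$ of lines of $R$ spanning an extra point, which gives $F$ and $\cL_0$ with all the needed properties in one stroke. Rewriting your proof to cite Theorem~\ref{pgmatching} rather than Lemma~\ref{findunstable} removes the gap without changing any of the mathematics.
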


\begin{proof}
Let $n \ge 2$ and $k \ge 0$ be integers, and $q$ be a prime power. Let $m >
n+k+1$ be an integer such that 
\[\grf{q}{k}{r} > \frac{q^{r+j}-1}{q-1} + \max(q^2+q,f_{\ref{pgmatching}}(q,k))\]
for all $r \ge m$ and $0 \le j < k$. We set $f_{\ref{spanningwin}}(n,q,k) = m$. 

Let $M \in \EX(U_{2,q^2+q+1})$ be a $(q,k)$-overfull matroid
of rank at least $m$,
and let $R$ be a $\PG(r(M)-1,q)$-restriction of $M$. We
will show that $M$ has the required minor $N$; we may assume that $M$ is
simple. 

\begin{claim}
If $k \ge 1$, then no line of $M$ contains more than $q^2+1$ points. 
\end{claim}

\begin{proof}[Proof of claim:]
Let $L$ be a line of $M$ containing at least $q^2+2$ points. We have $|L| \le
q^2+q$, so $|E(R) \cup L| \le \frac{q^{r(M)}-1}{q-1} + q^2+q < |M|$ by
definition of $m$. Therefore, there is a point of $M$ in neither $R$ nor $L$.
By Lemma~\ref{longlinewin}, $M$ has a $U_{2,q^2+q+1}$-minor, a contradiction. 
\end{proof}

Let $\cL$ be the set of lines of $R$, and $\cL^+$ be the set of lines of $R$
that are not lines of $M$; note that each $L \in \cL^+$ contains exactly $q+1$
points of $R$, and spans an extra point in $M$. By Lemma~\ref{pgframe}, every
point of $M \del E(R)$ is spanned by a line in $\cL^+$.

\begin{claim}
$\cL^+$ contains a $(k+1)$-matching of $R$. 
\end{claim}

\begin{proof}[Proof of claim:]
If $k = 0$, then since $|M| > |R|$, we must have $\cL^+ \ne \varnothing$, so
the claim is trivial. Thus, assume that $k \ge 1$ and that there is no such
matching. Let $F \subseteq E(R)$ and $\cL_0 \subseteq \cL$ be the sets defined
in Theorem~\ref{pgmatching}. Let $j = r_M(F)$; we know that $0 \le j \le k$,
and that $\cL_0$ is empty if $j = k$. Let $\cL_F = \{L \in \cL: |L \cap F| =
1\}$. By definition, every point of $M \del R$ is in the closure of $F$, or the
closure of a line in $\cL_F \cup \cL_0$. 

Every point of $R \del F$ lies on exactly $|F|$ lines in $\cL_F$, and each such
line contains exactly $q$ points of $R \del F$, so 
\[|\cL_F| = \frac{|F||R \del F|}{q} = \frac{(q^j-1)(q^{r(M)}-q^j)}{q(q-1)^2}.\]
Furthermore, each line in $\cL$ contains $q+1$ points of $R$, and its closure
in $M$ contains at most $q^2-q$ points of $M \del R$ by the first claim. We 
argue that $|\cl_M(F)| \le \frac{q^{2j}-1}{q^2-1}$; if $j \le 2$, then this follows from the first claim, and otherwise, we have $r(M) \ge k+7$, so the bound
follows by applying Lemma~\ref{longlinewin2} to $M$ and $\cl_M(F)$. We now estimate $|M|$. 
\begin{align*}
|M| &= |R| + |M \del E(R)| \\
& \le |R| + \sum_{L \in \cL_F \cup \cL_0}|L - E(R)| + |\cl_M(F) - F|\\
& \le \frac{q^{r(M)}-1}{q-1} + (q^2-q)(|\cL_F|+|\cL_0|) +\left(\frac{q^{2j}-1}{q^2-1} - \frac{q^j-1}{q-1}\right).
\end{align*}
Now, a calculation and our value for $\cL_F$ obtained earlier together give
$|M| \le \grf{q}{j}{r(M)} + (q^2-q)|\cL_0|.$ If $j < k$, then, since $r(M) \ge
m$ and $|\cL_0| \le f_{\ref{pgmatching}}(q,k)$, we have $|M| \le
\grf{q}{k}{r(M)}$ by definition of $m$. If $j = k$, then $|\cL_0| = 0$, so the
same inequality holds. In either case, we contradict the fact that
$M$ is $(q,k)$-overfull.
\end{proof}

Now, $\cL^+$ has a matching of size $k+1$, so by construction of $\cL^+$, there
is an $R$-unstable set $X$ of size $k+1$ in $M$. Since $r(M) \ge m > n+k+1$,
the required minor $N$ is given by  Lemma~\ref{contractunstable}. 
\end{proof}

\section{Connectivity}

A matroid $M$ is \textit{weakly round} if there is no pair of sets $A,B$ with
union $E(M)$, such that $r_M(A) \le r(M)-2$ and $r_M(B) \le r(M)-1$. Any
matroid of rank at most $2$ is clearly weakly round. Weak roundness is a very
strong connectivity notion, and is preserved by contraction; the following
lemma is easily proved, and we use it freely. 

\begin{lemma}\label{roundconnectivity}
If $M$ is a weakly round matroid, and $e \in E(M)$, then $M \con e$ is weakly round. 
\end{lemma}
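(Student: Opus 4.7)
The plan is to prove the contrapositive. Assume that $M\con e$ is not weakly round, so there exist $A, B \subseteq E(M)-\{e\}$ with $A \cup B = E(M)-\{e\}$, $r_{M\con e}(A) \le r(M\con e) - 2$, and $r_{M\con e}(B) \le r(M\con e) - 1$. The natural candidate witness for $M$ itself failing to be weakly round is the pair $A' = A \cup \{e\}$ and $B' = B \cup \{e\}$, which trivially covers $E(M)$. So the whole task reduces to verifying the two rank inequalities $r_M(A') \le r(M) - 2$ and $r_M(B') \le r(M) - 1$.

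For this I would invoke the standard identity $r_M(S \cup \{e\}) = r_{M\con e}(S) + r_M(\{e\})$ for every $S \subseteq E(M)-\{e\}$, together with $r(M) = r(M\con e) + r_M(\{e\})$. When $e$ is not a loop, both identities pick up an extra $+1$, so subtracting yields $r_M(A') - r(M) = r_{M\con e}(A) - r(M\con e)$ and similarly for $B'$; the bounds transfer verbatim. When $e$ is a loop, $r_M(\{e\}) = 0$, $r(M\con e) = r(M)$, and $r_M$ and $r_{M\con e}$ agree on subsets of $E(M) - \{e\}$, so once again the inequalities carry over. Either way $(A',B')$ witnesses that $M$ is not weakly round, a contradiction.

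I anticipate no real obstacle: the lemma amounts to a routine unpacking of the rank function of a contraction, which matches the author's own remark that it is easily proved. The only mild caveat is making sure the loop case is handled, but this is immediate since contraction by a loop leaves ranks of all subsets of the remaining ground set unchanged.
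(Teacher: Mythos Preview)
Your argument is correct. The paper itself does not supply a proof of this lemma; it merely states that it ``is easily proved, and we use it freely,'' so there is nothing to compare against beyond confirming that your contrapositive argument works, which it does.

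One small remark: the loop/non-loop case split is unnecessary. The contraction identity $r_{M/e}(S) = r_M(S \cup \{e\}) - r_M(\{e\})$ and the equality $r(M/e) = r(M) - r_M(\{e\})$ both hold uniformly, so subtracting gives $r_M(S \cup \{e\}) - r(M) = r_{M/e}(S) - r(M/e)$ in all cases, and the bounds transfer directly without any case analysis.
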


The first step in our proof of the main theorems will be to reduce to the
weakly round case; the next two lemmas give this reduction.

\begin{lemma}\label{getdenserestriction}
If $M$ is a matroid, then $M$ has a weakly round restriction $N$ such that
$\elem(N) \ge \varphi^{r(N)-r(M)}\elem(M)$, where $\varphi =
\tfrac{1}{2}(1+\sqrt{5})$. 
\end{lemma}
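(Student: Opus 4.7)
My plan is to proceed by induction on $r(M)$, exploiting the defining identity $\varphi^{-1}+\varphi^{-2}=1$ of the golden ratio. The base case is vacuous: if $M$ is weakly round (for example, if $r(M)\le 2$), then $N=M$ itself satisfies the conclusion with $r(N)-r(M)=0$.

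For the inductive step, suppose $M$ is not weakly round, so there exist sets $A,B$ with $A\cup B=E(M)$, $r_M(A)\le r(M)-2$, and $r_M(B)\le r(M)-1$. Since every point of $M$ is a point of either $M|A$ or $M|B$, we have $\elem(M)\le \elem(M|A)+\elem(M|B)$. I then claim that either $\elem(M|A)\ge \varphi^{-2}\elem(M)$ or $\elem(M|B)\ge \varphi^{-1}\elem(M)$, for otherwise
\[\elem(M)\le \elem(M|A)+\elem(M|B)<\varphi^{-2}\elem(M)+\varphi^{-1}\elem(M)=\elem(M),\]
a contradiction.

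In the first case, apply induction to $M|A$ to obtain a weakly round restriction $N$ of $M|A$ (hence of $M$) with $\elem(N)\ge \varphi^{r(N)-r(M|A)}\elem(M|A)\ge \varphi^{r(N)-r(M|A)-2}\elem(M)$. Since $r(M|A)\le r(M)-2$, the exponent $r(N)-r(M|A)-2\ge r(N)-r(M)$, and because $\varphi>1$ this gives the desired bound. The second case is analogous, using $r(M|B)\le r(M)-1$ and the factor $\varphi^{-1}$.

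There is no real obstacle here beyond spotting that the golden ratio is exactly the constant which makes the two deficits ($-2$ and $-1$ in rank) combine correctly via $\varphi^{-2}+\varphi^{-1}=1$; once that observation is made, the induction is routine. One minor point to verify is that the induction terminates, which is fine because $r(M|A)<r(M)$ and $r(M|B)<r(M)$ strictly in the non-weakly-round case.
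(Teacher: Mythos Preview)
Your proof is correct and follows essentially the same approach as the paper's own proof: both use induction on $r(M)$, split $E(M)=A\cup B$ via the failure of weak roundness, invoke $\varphi^{-1}+\varphi^{-2}=1$ to guarantee that one of $M|A$, $M|B$ is dense enough, and recurse. The only cosmetic difference is that the paper normalizes to $r_M(A)=r(M)-2$ and $r_M(B)=r(M)-1$ (which can always be arranged by enlarging $A$ and $B$), whereas you keep the inequalities and use $\varphi>1$ to absorb the slack; both are fine.
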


\begin{proof}				
We may assume that $M$ is not weakly round, so $r(M) > 2$, and there are sets
$A,B$ of $M$ such that $r_M(A) = r(M)-2$, $r_M(B) = r(M)-1$, and $E(M) = A \cup
B$. Now, since $\varphi^{-1} + \varphi^{-2} = 1$, either $\elem(M|A) \ge
\varphi^{-2} \elem(M)$ or $\elem(M|B) \ge \varphi^{-1} \elem(M)$; in the first
case, by induction $M|A$ has a weakly round restriction $N$ with $\elem(N) \ge
{\varphi}^{r(N)-r(M|A)} \elem(M|A) \ge
\varphi^{r(N)-r(M)+2}\varphi^{-2}\elem(M) = \varphi^{r(N)-r(M)}\elem(M)$,
giving the result. The second case is similar. 
\end{proof}

\begin{lemma}\label{weakroundnessreduction}
Let $q$ be a prime-power, and $k\ge 0$ be an integer. If
$\cM$ is a base-$q$ exponentially dense minor-closed 
class of matroids that contains $(q,k)$-overfull matroids
of arbitrarily large rank, then
$\cM$ contains weakly round, $(q,k)$-overfull
matroids of arbitrarily large rank.
\end{lemma}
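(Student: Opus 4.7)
The plan is to apply Lemma~\ref{getdenserestriction} directly to any sufficiently high-rank $(q,k)$-overfull matroid $M \in \cM$, and then verify that the weakly round restriction it produces is itself $(q,k)$-overfull and of sufficiently large rank. Given a target rank $n$, I would pick $M \in \cM$ that is $(q,k)$-overfull with $r(M)$ very large (the necessary size to be pinned down at the end), and let $N$ be the weakly round restriction of $M$ supplied by Lemma~\ref{getdenserestriction}, so that $\elem(N) \ge \varphi^{r(N)-r(M)}\elem(M)$ with $\varphi = \tfrac{1}{2}(1+\sqrt{5})$. The observation that makes everything work is the chain $\varphi < 2 \le q$.

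For the overfullness-preservation step, set $g(r) = \frac{q^{r+k}-1}{q-1} - q\frac{q^{2k}-1}{q^2-1}$ and $s = r(M) - r(N) \ge 0$. Since $\elem(M) > g(r(M))$ and $\elem(N) \ge \varphi^{-s}\elem(M)$, it suffices to establish the purely arithmetic inequality $g(r(M)) \ge \varphi^s g(r(N))$. Expanding, this reduces to a comparison whose leading term is $q^{r(N)+k}(q^s - \varphi^s) \ge 0$ and whose lower-order corrections are controlled using $q > \varphi \ge 1$; both are routine once one notices $q \ge 2 > \varphi$.

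For the rank lower bound, I would use that a base-$q$ exponentially dense class must satisfy the Growth Rate Theorem's hypothesis, hence $\cM \subseteq \EX(U_{2,\ell+2})$ for some integer $\ell \ge 2$. Kung's Theorem (Theorem~\ref{kungthm}) then bounds $\elem(N) \le \ell^{r(N)}/(\ell-1)$, while overfullness of $M$ gives $\elem(M) > q^{r(M)+k}/\bigl(2(q-1)\bigr)$ once $r(M)$ is not too small. Combining with $\elem(N) \ge \varphi^{r(N)-r(M)}\elem(M)$ produces an inequality of the form $(q/\varphi)^{r(M)} \le C(\ell/\varphi)^{r(N)}$ for a constant $C = C(q,k,\ell)$, which forces $r(N) \ge \beta \, r(M) - O(1)$ with $\beta = \log(q/\varphi)/\log(\ell/\varphi) > 0$. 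Choosing $r(M)$ large enough thus produces $r(N) \ge n$, completing the reduction.

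The main (and really only) obstacle is the overfullness-preservation calculation, and it passes precisely because the density multiplier $\varphi$ from Lemma~\ref{getdenserestriction} is strictly less than $q$. If the multiplier happened to be $q$ or larger, the reduction would collapse and a different argument — for example, one that builds a weakly round minor rather than restriction, or that exploits the specific form of overfullness more carefully — would be required.
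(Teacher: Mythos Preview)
Your proposal is correct and follows essentially the same approach as the paper: apply Lemma~\ref{getdenserestriction}, use the key observation $\varphi < 2 \le q$ to verify that overfullness is preserved, and then bound the rank drop via an exponential density upper bound on $\cM$. The only cosmetic difference is that the paper invokes the Growth Rate Theorem's bound $h_{\cM}(n) \le cq^n$ (rather than Kung's Theorem) to control $s = r(M) - r(N)$ by a fixed constant $t$, yielding the slightly sharper $r(N) > r(M) - t$ in place of your $r(N) \ge \beta\, r(M) - O(1)$, but both suffice.
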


\begin{proof}
Note that $\varphi<2\le q$;
by the Growth Rate Theorem, there is an integer $t>0$
such that 
$$ \varepsilon(M) \le 
 \left(\frac{q}{\varphi}\right)^{t} \frac{q^{r(M)+k}-1}{q-1}
-q\frac{q^{2k}-1}{q^2-1},$$
for all $M\in \cM$.

For any integer $n>0$, consider
a $(q,k)$-overfull matroid $M\in \cM$
with rank at least $n+t$.
By Lemma~\ref{getdenserestriction},
$M$ has a weakly round restriction $N$ such that $\elem(N) \ge \varphi^{-s}\elem(M)$, where $s = r(M)-r(N)$. We have
\begin{eqnarray*}
\elem(N)  &\ge& \varphi^{-s} \elem(M) \\
&>& \varphi^{-s} \left(\frac{q^{r(M)+k}-1}{q-1}
-q\frac{q^{2k}-1}{q-1}\right)\\
&>& \left(\frac{q}{\varphi}\right)^{s} \frac{q^{r(N)+k}-1}{q^2-1}
-q\frac{q^{2k}-1}{q^2-1}.
\end{eqnarray*}
Thus $N$ is $(q,k)$-overfull. Moreover, by the definition of $t$,
we have $s<t$ and, hence, $r(N)> n$.
\end{proof}

% \begin{lemma}\label{weakroundnessreduction}
% There is an integer-valued function
% $f_{\ref{weakroundnessreduction}}(r,d,\ell)$ so that, for any integers $d \ge
% 0$, $\ell \ge 2$ and $r \ge d$, and real-valued function $g(n)$ satisfying
% $g(d) \ge 1$ and $g(n) \ge 2g(n-1)$ for all $n > d$, if $M \in \cU(\ell)$ is a
% matroid such that $r(M) \ge f_{\ref{weakroundnessreduction}}(r,d,\ell)$ and
% $\elem(M) > g(r(M))$, then $M$ has a weakly round restriction $N$ such that
% $r(N) \ge r$ and $\elem(N) > g(r(N))$. 
% \end{lemma}
% 
% \begin{proof}
% Let $\ell \ge 2$, $r \ge 0$ and $d \ge 0$ be integers, and $g(n)$ be a
% real-valued function with $g(d) \ge 1$ and $g(n) \ge 2g(n-1)$ for all $n > d$.
% Observe that $g(n) \ge 2^{n-m} g(m)$ for all $m$, $n$ with $d \le m \le n$. Set
% $f_{\ref{weakroundnessreduction}}(\ell,d,r)$ to be an integer $s$ so that $s
% \ge d$ and $2^{-d}(\sqrt{5}-1)^{s} \ge \frac{\ell^r-1}{\ell-1}$.
% 
% Let $M$ be a matroid with $r(M) \ge s$, and $\elem(M) \ge g(r(M))$. By
% Lemma~\ref{getdenserestriction}, there is a weakly round restriction $N$ of $M$
% satisfying 
% $	\elem(N) > \varphi^{r(N)-r(M)}g(r(M)) 
% \ge \varphi^{-r(M)}2^{r(M)-d} 
% = 2^{-d} (\sqrt{5}-1)^{r(M)}
% \ge \tfrac{\ell^r-1}{\ell-1},
% $
% since $r(M) \ge s$. Therefore, by Theorem~\ref{kungthm}, we have $r(N) \ge r
% \ge d$. Now, $\elem(N) > \varphi^{r(N)-r(M)}g(r(M)) \ge
% \varphi^{r(N)-r(M)}2^{r(M)-r(N)}g(r(N)) \ge g(r(N))$, so $N$ is the required
% restriction. 		
% \end{proof}

\section{Exploiting connectivity}

Our next lemma exploits weak roundness by showing that any interesting low-rank 
restriction can be contracted into the span of a projective geometry.
\begin{lemma}\label{contractrestriction}
There is an integer-valued function $f_{\ref{contractrestriction}}(n,q,t,\ell)$
so that, for any prime power $q$, and integers $n \ge 1,\ell \ge 2$ and $t \ge
0$, if $M \in \EX(U_{2,\ell+2})$ is a weakly round matroid with a
$\PG(f_{\ref{contractrestriction}}(n,q,t,\ell)-1,q)$-minor, and $T$ is a
restriction of $M$ of rank at most $t$, then there is a minor $N$ of $M$ of
rank at least $n$, such that $T$ is a restriction of $N$, and $N$ 
has a $\PG(r(N)-1,q)$-restriction.
\end{lemma}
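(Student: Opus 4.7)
The plan is to construct $N$ as a contraction $N = M/C$ of $M$, for an independent set $C \subseteq E(M) \setminus E(T)$ that is skew to $E(T)$ in $M$ and chosen so that $M/C$ has a spanning $\PG(r(M/C)-1,q)$-restriction. By the skewness condition, $N|E(T) = T$, so $T$ is a restriction of $N$; and we will have $r(N)=r(M)-|C|$ exceeding $n$ as long as $f_{\ref{contractrestriction}}(n,q,t,\ell)$ is chosen large enough to absorb the bounded loss of PG-rank incurred during the construction.

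To build this $C$, I would start from the given $\PG(s-1,q)$-minor $G_0 = M/C_0 \setminus D_0$ of $M$, with $s = f_{\ref{contractrestriction}}(n,q,t,\ell)$. Observe that $M/C_0$ already has $G_0$ as a spanning PG-restriction, so the only obstruction to taking $C = C_0$ outright is that $C_0$ may intersect $E(T)$, or fail to be skew to $E(T)$ in $M$. The main task is therefore to modify $C_0$ to satisfy both conditions while preserving a PG-minor of rank at least $n$. This modification would proceed element-by-element: for each $e \in C_0 \cap E(T)$, use weak roundness to swap $e$ out of the contraction set for a replacement $e' \in E(M) \setminus (C_0 \cup E(T))$. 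By Lemma~\ref{roundconnectivity}, weak roundness is preserved under contraction, and the PG-minor structure provides the density needed to find such replacements; each swap may reduce the PG-rank by at most one. After eliminating $C_0 \cap E(T)$, one can further invoke Lemma~\ref{skewsubset} (applied with $A$ chosen from the dense part of $M/(C_0 \setminus E(T))$ and $B = E(T)$) to shrink the contraction set to a subset skew to $E(T)$, incurring only a bounded loss in PG-rank.

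The hardest step will be the element exchange. Concretely, in a weakly round matroid $M'$ such that the rank-one contraction $M'/e$ contains a $\PG(s'-1,q)$-restriction, one must produce an alternative element $e' \notin E(T)$ with $M'/e'$ containing a comparably large PG-restriction. I expect this to follow from a careful combinatorial argument using the $\EX(U_{2,\ell+2})$ hypothesis together with the abundant connectivity of weakly round matroids, possibly with further applications of Lemma~\ref{skewsubset} to locate suitable exchange candidates in the dense part of $M'$. Provided this step goes through, the total loss of PG-rank is bounded by a function of $t$, $q$, and $\ell$, which determines the required value of $f_{\ref{contractrestriction}}(n,q,t,\ell)$.
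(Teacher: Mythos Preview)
Your element-exchange step is a genuine gap. You want, for each $e \in C_0 \cap E(T)$, to find $e' \notin E(T)$ so that $M/(C_0 - e + e')$ still has a spanning $\PG$-restriction; but there is no evident reason such an $e'$ exists. A spanning $\PG$-restriction is rigid, and undoing one contraction while redoing another need not preserve it even approximately --- weak roundness and the $\EX(U_{2,\ell+2})$ hypothesis do not, on their own, supply such a swap, and you offer no argument beyond ``I expect this to follow.'' Your subsequent use of Lemma~\ref{skewsubset} ``to shrink the contraction set'' is also off: that lemma outputs a dense subset of a given set $A$ skew to $B$; it does not pare down an independent contraction set.

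The paper avoids all of this by not trying to keep the $\PG$ spanning at intermediate steps. It contracts the \emph{maximal} $C' \subseteq C_0$ for which $T$ remains a restriction of $M' := M/C'$; maximality forces $C_0 \setminus C' \subseteq \cl_{M'}(E(T))$, so $r_{M'}(E(G_0)) \le s + t$ while $\elem_{M'}(E(G_0)) = \tfrac{q^s-1}{q-1}$. Now Lemma~\ref{skewsubset} (applied with $A = E(G_0)$ and $B = E(T)$) gives $A' \subseteq E(G_0)$ skew to $E(T)$ and still dense enough that Lemma~\ref{gkthm} yields a $\PG(n'-1,q)$-minor $N_1$ of $M'|A'$, where $n' = \max(n,t+1)$; the contraction set $C_1$ for $N_1$ lies in $A'$ and is therefore skew to $E(T)$, so $M'/C_1$ retains both $T$ and $N_1$ as restrictions. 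Finally --- a step your outline omits entirely --- one takes a minor $M_2$ minimal subject to being weakly round with both $T$ and $N_1$ as restrictions; if $r(M_2) > r(N_1)$ then, since $r(T) \le r(M_2)-2$, weak roundness provides an element outside $\cl_{M_2}(E(T)) \cup \cl_{M_2}(E(N_1))$ to contract, contradicting minimality. Hence $N_1$ spans $M_2$, and $M_2$ is the desired minor.
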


\begin{proof}
Let $n,\ell$ and $t$ be positive integers with $\ell \ge 2$. Let $n' =
\max(n,t+1)$, and set $f_{\ref{contractrestriction}}(n,q,t,\ell)$ to be an
integer $m$ such that $m \ge 2t$, and 
\[ \frac{q^m-1}{q-1} \ge \alpha_{\ref{gkthm}}(n',q-\tfrac{1}{2},\ell)\left(\frac{\ell(q-\tfrac{1}{2})}{q-\tfrac{3}{2}}\right)^t(q - \tfrac{1}{2})^m,\]
and set $f_{\ref{contractrestriction}}(n,q,t,\ell) = m$. 

Let $M \in \EX(U_{2,\ell+2})$ be a weakly round matroid with a $\PG(m-1,q)$-minor $N =
M \con C \del D$, where $r(N) = r(M) - r_M(C)$. Let $T$ be a restriction of $M$
of rank at most $t$; we show that the required minor exists. 

\begin{claim}
There is a weakly round minor $M_1$ of $M$, such that $T$ is a restriction of
$M_1$, and $M_1$ has a $\PG(n'-1,q)$-restriction $N_1$.
\end{claim}

\begin{proof}[Proof of claim:]
Let $C' \subseteq C$ be maximal such that $T$ is a restriction of $M \con C'$,
and let $M' = M \con C'$. Maximality implies that $C-C' \subseteq
\cl_{M'}(E(T))$, so $r_{M'}(C-C') \le t$. Now, $r_{M'}(E(N)) = r(N) +
r_{M'}(C-C') \le m+t$. Therefore, 
\begin{align*}
\elem_{M'}(E(N)) &= \frac{q^m-1}{q-1}\\
&\ge \alpha_{\ref{gkthm}}(n',q-\tfrac{1}{2},\ell)\ell^t(q-\tfrac{3}{2})^{-t}(q-\tfrac{1}{2})^{m+t} \\
& \ge \alpha_{\ref{gkthm}}(n',q-\tfrac{1}{2},\ell)(\ell(q-\tfrac{3}{2})^{-1})^t(q-\tfrac{1}{2})^{r_{M'}(E(N))}.
\end{align*}

By Lemma~\ref{skewsubset} applied to $E(N)$ and $E(T)$, with $\mu = q -
\tfrac{1}{2}$, there is a set $A \subseteq E(N)$, skew to $E(T)$ in $M'$, such
that
\[\elem(M'|A) \ge \alpha_{\ref{gkthm}}(n',q-\tfrac{1}{2},\ell)(q-\tfrac{1}{2})^{r(M'|A)}.\]
Therefore, Theorem~\ref{kungthm} implies that $M'|A$ has a $\PG(n'-1,q')$-minor $N_1
= (M'|A) \con C_1 \del D_1$, for some $q' > q- \tfrac{1}{2}$. Let $M_1 = M'
\con C_1$. The set $A$ is skew to $E(T)$ in $M'$, and therefore also skew to
$C-C'$, so $M'|A = (M' \con (C-C'))|A = N|A$, so $M'|A$ is
$\GF(q)$-representable, and so is its minor $N_1$. Thus, $q' = q$, and $N_1$ is
a $\PG(n'-1,q)$-restriction of $M_1$. Moreover, $C_1 \subseteq A$, so $C_1$ is
skew to $E(T)$ in $M'$, and therefore $M_1$ has $T$ as a restriction. 
The matroid $M_1$ is a contraction-minor of $M$,
so is weakly round, and thus satisfies the claim. 
\end{proof}

Let $M_2$ be a minor-minimal matroid such that:
\begin{itemize}
\item $M_2$ is a weakly round minor of $M_1$, and
\item $T$ and $N_1$ are both restrictions of $M_2$.
\end{itemize}

If $r(N_1) = r(M_2)$, then $M_2$ is the required minor of $M$. We may therefore assume that $r(M_2) > r(N_1) = n'$. We have $r(T) \le t \le n'-1 \le r(M_2)-2$, so by
weak roundness of $M_2$, there is some $e \in E(M_2)$ spanned by neither $E(T)$ nor $E(N_1)$,
contradicting minimality of $M_2$.  
\end{proof}

\section{Critical elements}	

An element $e$ in a $(q,k)$-overfull matroid $M$ is
called {\em $(q,k)$-critical} if $M\con e$ 
is not $(q,k)$-overfull.
\begin{lemma}\label{criticallines}
Let $q$ be a prime power and $k\ge 0$ be an integer.
If $e$ is a $(q,k)$-critical element in
a $(q,k)$-overfull matroid $M$, then either
\begin{enumerate}[(i)]
\item $e$ is contained in a line with at least $q^2+2$ points, or
\item $e$ is contained in $\frac{q^{2k}-1}{q^2-1} +1$ lines, each with 
at least $q+2$ points.
\end{enumerate}
\end{lemma}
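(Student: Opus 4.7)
The plan is a direct line-counting argument around $e$. First I would assume $M$ is simple (replacing $M$ by $\si(M)$ changes neither $\elem$ nor the lines through $e$), and list the lines of $M$ through $e$ as $L_1,\dots,L_s$ with $|L_i|$ points each. Each such line becomes a single point of $M\con e$, while the sets $L_i\setminus\{e\}$ partition $E(M)\setminus\{e\}$; so $\elem(M\con e)=s$ and $\elem(M)=1+\sum_i(|L_i|-1)$, giving the identity
\[\elem(M)-\elem(M\con e)=1+\sum_{i=1}^s(|L_i|-2).\]

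Next I would translate the hypotheses into a lower bound on this quantity. Since $\elem(M)$ strictly exceeds $\tfrac{q^{r(M)+k}-1}{q-1}-q\tfrac{q^{2k}-1}{q^2-1}$ while $\elem(M\con e)$ does not exceed $\tfrac{q^{r(M)-1+k}-1}{q-1}-q\tfrac{q^{2k}-1}{q^2-1}$, subtracting and using integrality yields
\[\sum_{i=1}^s(|L_i|-2)\ge q^{r(M)+k-1}.\]

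Finally, I would assume for contradiction that (i) and (ii) both fail, and bound the same sum from above. Let $a$ be the number of lines $L_i$ with $|L_i|\ge q+2$. Failure of (i) gives $|L_i|\le q^2+1$ for all $i$, while failure of (ii) gives $a\le\tfrac{q^{2k}-1}{q^2-1}$. Splitting the sum by whether $|L_i|\ge q+2$,
\[\sum_{i=1}^s(|L_i|-2)\le a(q^2-1)+(s-a)(q-1)=a(q^2-q)+s(q-1),\]
and using $s=\elem(M\con e)\le\tfrac{q^{r(M)-1+k}-1}{q-1}-q\tfrac{q^{2k}-1}{q^2-1}$ together with the bound on $a$, the $\tfrac{q^{2k}-1}{q^2-1}$ terms cancel exactly and the right-hand side collapses to $q^{r(M)+k-1}-1$, contradicting the lower bound.

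There is no real obstacle here; the whole argument is bookkeeping. The substantive point is that the thresholds $q^2+1$ and $\tfrac{q^{2k}-1}{q^2-1}$ appearing in (i) and (ii) are calibrated precisely so that, once $a(q^2-q)$ is paired against the $-q\tfrac{q^{2k}-1}{q^2-1}\cdot(q-1)$ arising from the upper bound on $s$, the two contributions annihilate and the remaining terms yield exactly $q^{r(M)+k-1}-1$.
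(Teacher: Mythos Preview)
Your argument is correct and is essentially the paper's own proof, reorganized around the difference $\elem(M)-\elem(M\con e)$ rather than a direct upper bound on $\elem(M)$. Both proofs count lines through $e$, use $|\cL|=\elem(M\con e)$, bound the long lines by $q^2+1$ and the remaining ones by $q+1$, and obtain the same cancellation of the $\tfrac{q^{2k}-1}{q^2-1}$ terms to contradict overfullness.
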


\begin{proof}
Suppose otherwise.
Let $\cL$ be the set of all lines of $M$ containing $e$, and let $\cL_1$
be the set of the
$\min(|\cL|,\frac{q^{2k}-1}{q^2-1})$ longest lines in $\cL$.
Every line
in $\cL-\cL_1$ has at most $q+1$ points and every line 
in $\cL_1$ has at most $q^2+1$ points, so 
\begin{align*}
 \varepsilon(M) &\le 1 + q|\cL| + (q^2-q) |\cL_1|   \\
&\le  1+ q\varepsilon(M\con e) + (q^2-q)\frac{q^{2k}-1}{q^2-1}  \\
&\le 1+ q\left(\frac{q^{r(M)+k-1}-1}{q-1}-q\frac{q^{2k}-1}{q^2-1}\right) + (q^2-q)\frac{q^{2k}-1}{q^2-1}  \\
&= \frac{q^{r(M)+k}-1}{q-1} +q\frac{q^{2k}-1}{q^2-1},  
\end{align*}
contradicting the fact that $M$ is $(q,k)$-overfull.
\end{proof}

The following result shows that a large number of $(q,k)$-critical elements gives a denser minor.
\begin{lemma}\label{criticalwin}
There is an integer-valued function $f_{\ref{criticalwin}}(n,q,k)$ so that, for
any prime power $q$, and integers $k \ge 0$, $n > k + 1$,
if $m\ge f_{\ref{criticalwin}}(n,q,k)$ is an integer, and
$M \in \EX(U_{2,q^2+q+1})$ is a $(q,k)$-overfull,
weakly round matroid such that 
\begin{itemize}
\item $M$ has a $\PG(m-1,q)$-minor, and
\item $M$ has a rank-$m$ set of $(q,k)$-critical elements,
\end{itemize}
then $M$ has a rank-$n$, $(q,k+1)$-full minor with a $U_{2,q^2+1}$-restriction.
\end{lemma}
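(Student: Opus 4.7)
The plan is to combine the rank-$m$ set $S$ of $(q,k)$-critical elements, the weak roundness of $M$, and the $\PG(m-1,q)$-minor to produce a minor $N$ of $M$ with a spanning $\PG$-restriction $R$ admitting an $R$-unstable set of size $k+1$. Once such an $N$ is in hand, Lemma~\ref{contractunstable} produces the required rank-$n$ $(q,k+1)$-full minor with a $U_{2,q^2+1}$-restriction.

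By Lemma~\ref{criticallines}, each element of $S$ is of \emph{type (i)}, lying on a line of at least $q^2+2$ points, or of \emph{type (ii)}, lying on at least $\tfrac{q^{2k}-1}{q^2-1}+1$ lines of at least $q+2$ points. Split $S = S_1 \cup S_2$ accordingly; for $m$ large enough, at least one of $r_M(S_1), r_M(S_2)$ exceeds any desired threshold. If $r_M(S_1)$ is large, pick $e_1, e_2 \in S_1$ whose long lines $L_1, L_2$ are distinct, apply Lemma~\ref{contractrestriction} with $T = M|(L_1 \cup L_2)$ to get a minor $N$ of large rank with spanning $\PG$-restriction $R$ and $L_1 \cup L_2$ preserved, and invoke Lemma~\ref{longlinewin} (with $L_1$ as the long line and a point of $L_2 \setminus (L_1 \cup E(R))$ as the extra element, which exists since $L_2$ has $\ge q^2+2$ points while $|L_2 \cap L_1| \le 1$ and $|L_2 \cap E(R)| \le q+1$) to get a $U_{2,q^2+q+1}$-minor of $N$, contradicting $M \in \EX(U_{2,q^2+q+1})$.

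If instead $r_M(S_2)$ is large, iterated use of Lemma~\ref{skewsubset} extracts $k+1$ mutually skew critical elements $e_1, \ldots, e_{k+1} \in S_2$, and, exploiting the many long lines through each $e_i$, we select a long line $L_i \ni e_i$ for each $i$ so that $\{L_1, \ldots, L_{k+1}\}$ is mutually skew in $M$. Lemma~\ref{contractrestriction} with $T = M|(\bigcup_i L_i)$ then yields a minor $N$ of rank at least $n+k+1$ with a spanning $\PG$-restriction $R$ and the $L_i$ preserved as mutually skew $(\ge q+2)$-point lines. By the consequence of Lemma~\ref{pgframe} noted after Lemma~\ref{singleproj}, each $L_i$ is spanned in $R$ by a unique line $L_i'$, and mutual skewness of the $L_i$ forces the $L_i'$ to be a matching of size $k+1$ in $R$. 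Choosing a point $x_i \in L_i \setminus E(R)$ for each $i$ produces an $R$-unstable set of size $k+1$, and Lemma~\ref{contractunstable} applied to this set gives the desired minor.

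The main obstacle is the skew-line selection in case (ii): given the mutually skew $e_i$, we must choose the long line $L_i$ at each $e_i$ so that the $L_i$ are genuinely mutually skew, not merely concurrent at distinct points. For $k \ge 2$ the count $\tfrac{q^{2k}-1}{q^2-1}+1 \ge q^2+2$ of long lines per critical element provides ample room to select inductively, avoiding the growing span of earlier choices. The cases $k = 0, 1$ (where each critical element has only 1 or 2 long lines) require instead a counting argument over the whole rank-$m$ set to locate a mutually skew configuration. Once the skew selection succeeds, preservation in $N$ is automatic because $T$ is a restriction of $N$, which carries line point-counts and mutual skewness verbatim; the explicit bound on $f_{\ref{criticalwin}}(n,q,k)$ is then obtained from the size requirements of Lemmas~\ref{skewsubset}, \ref{contractrestriction}, and \ref{contractunstable}.
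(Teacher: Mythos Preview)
The skew-line selection in your case (ii) is the heart of the matter, and the inductive counting you sketch does not go through. Suppose $L_1, \ldots, L_{i-1}$ have been chosen mutually skew, and $e_i \in S_2$ lies outside their span $F$. A long line $L_i \ni e_i$ is skew to $F$ if and only if $L_i \not\subseteq \cl_M(F \cup \{e_i\})$; the number of lines through $e_i$ contained in that rank-$(2i-1)$ flat is $\elem\bigl((M/e_i)|\cl_{M/e_i}(F)\bigr)$, which by Theorem~\ref{kungthm} can be as large as $\tfrac{(q^2+q)^{2i-2}-1}{q^2+q-1}$. For $i = k+1$ this is on the order of $(q^2+q)^{2k-1}$, vastly exceeding the $\tfrac{q^{2k}-1}{q^2-1}+1 \approx q^{2k-2}$ long lines guaranteed through $e_i$. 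Indeed the count already fails at $i=2$ when $k=2$: you have $q^2+2$ long lines through $e_2$, but up to $q^2+q$ lines through $e_2$ may meet $\cl_M(L_1)$. So there is no ``ample room'', and the unspecified ``counting argument'' for $k \le 1$ is likewise unsupported.

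There is a second, independent gap. Even granting mutually skew long lines $L_1, \ldots, L_{k+1}$ preserved in $N$, your claim that each $L_i$ is spanned by a unique line $L_i'$ of $R$ is not what Lemma~\ref{pgframe} provides: that lemma guarantees only that each \emph{point} of $N \setminus E(R)$ is spanned by a line of $R$. A line $L_i$ of $N$ is spanned by a line of $R$ precisely when $|L_i \cap E(R)| \ge 2$, which you have not established; if $|L_i \cap E(R)| \le 1$, different points $x_i \in L_i \setminus E(R)$ may have different spanning lines $L_{x_i}'$, and mutual skewness of the $L_i$ does not force the $L_{x_i}'$ to form a matching in $R$.

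The paper avoids both obstacles by carrying the \emph{entire} collection of long lines through a large independent set $I$ of critical elements (packaged as a bounded-rank restriction $K$) into a minor $M_1$ with a spanning projective geometry $R_1$, via Lemma~\ref{contractrestriction}. It then invokes Lemma~\ref{findunstable}: either $M_1$ already has an $R_1$-unstable $(k+1)$-set, or contracting a rank-$\le k$ flat of $R_1$ leaves a spanning projective geometry plus a bounded set $D$ of at most $d$ extra points. In that tightly constrained situation, the many surviving long lines through the elements of $I$ (guaranteed by a rank argument using Lemma~\ref{longlinewin2}) must each use a point of $D$, and a direct pigeonhole on $\binom{d+1}{2}$ gives a contradiction. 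Lemma~\ref{findunstable}, which you never invoke, is the key structural reduction that makes the counting work.
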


\begin{proof}
Let $q$ be a prime power, and $k \ge 0$ and $n \ge 2$ be integers. Let $n' =
\max(k+8,n+k+1)$, let $d = f_{\ref{findunstable}}(q,k)$, let $t = d(d+1)+k+10$,
let $s=\frac{q^{2k}-1}{q^2-1}+1$,
and set $f_{\ref{criticalwin}}(n,q,k) =
f_{\ref{contractrestriction}}(n',q,t(s+1),q^2+q-1)$. 

Let $m\ge f_{\ref{criticalwin}}(n,q,k)$ be an integer, and let
$M \in \EX(U_{2,q^2+q+1})$ be a $(q,k)$-overfull,
weakly round matroid with
a $\PG(m-1,q)$-minor and a $t$-element independent set $I$
of $(q,k)$-critical elements (note that $t \le m$).
% (note that $t \le m \le f_{\ref{criticalwin}}(n,q,k)$) .
We will show that $M$ has the required minor.

By Lemma~\ref{criticallines}, for each element $e\in I$, 
there is a set $\cL_e$ of lines containing $e$ such that either
$\cL_e$ contains a single line with at least $q^2+2$ points,
or $|\cL_e|= \frac{q^{2k}-1}{q-1}+1$ and each line in $\cL_e$
has at least $q+2$ points.
There is a restriction $K$ of $M$ with
rank at most $t(s+1)$ that contains
all the lines $(\cL_e\, : \, e\in I)$.
By Lemma~\ref{contractrestriction}, $M$ has a
minor $M_1$ of rank at least $n'$ that has a
$\PG(r(M_1)-1,q)$-restriction $R_1$, and has $K$ as 
a restriction.
By Lemma~\ref{longlinewin},
$M_1$ has at most one line containing $q^2+2$ points.

\begin{claim}
There is a $(t-9)$-element subset $I_1$ of $I$ such that,
for each $e\in I_1$, we have $r_K(\cup \cL_e )\ge k+2$.
\end{claim}

\begin{proof}[Proof of claim:]
Note that $|I| = t \ge 9$.
If $k = 0$, then every $e \in I$ satisfies the required
condition, so an arbitrary $(t-9)$-subset of $I$ will do; we may 
thus assume that $k \ge 1$.  
Since $K$ contains at most one line with at least $q^2+2$ points,
there are at most two elements $e\in I$ with $|\cL_e|=1$.
If the claim fails, there is therefore an $8$-element 
subset $I_2$ of $I$ such that
$|\cL_e|=\frac{q^{2k}-1}{q^2-1}+1$ and
$r_K(\cup \cL_e) \le k+1$ for all $e \in I_2$.

For each $e\in I_2$,
let $F_e$ be the flat of $K$ spanned by $\cup \cL_e$.
Then $(K|F_e)\con e$ has rank at most $k$ and has
more than $\frac{q^{2k}-1}{q-1}$ points. Since $k \ge 1$, 
this matroid has rank at least $2$. 
Moreover, $M_1\con e$ has rank at least $n'-1 \ge k+7$ 
and has a $\PG(r(M_1 \con e)-1,q)$-restriction,
so, by Lemma~\ref{longlinewin2}, 
$(K|F_e)\con e$ has rank $2$. Hence,
$(K|F_e)\con e$ is a line containing at least $q^2+2$ points.

There are $28$ two-element subsets of $I_2$ and,
since each set $F_e$ has rank $\le 3$, there 
are at most $24$ pairs $(e,f)$ of elements in $I_2$
such that $f\in F_e$.
Hence there is a pair $(a,b)$ of elements in $I_2$
such that $a\not\in F_b$ and $b\not\in F_a$.
Now $K\con \{a,b\}$ has two lines each containing
at least $q^2+2$ points. Moreover,
$M_1\con \{a,b\}$ has rank at least $k + 6 \ge 7$, and
has a $\PG(r(M_1 \con \{a,b\})-1,q)$-restriction,
so we obtain a contradiction to Lemma~\ref{longlinewin}.
\end{proof}

\begin{claim}
$M_1$ has an $R_1$-unstable set of size $k+1$.
\end{claim}

\begin{proof}[Proof of claim.]
Suppose otherwise. By Lemma~\ref{findunstable}, there is a flat
$F$ of $R_1$ with rank at most $k$ such that
such that $\elem(M_1 \con F) \le
\elem(R_1 \con F) + f_{\ref{findunstable}}(q,k) = \elem(R_1 \con F) + d$. Let $M_2
= M_1 \con F$; the matroid $M_2$ has a $\PG(r(M_2)-1,q)$-restriction
$R_2$, and satisfies $E(M_2) = E(R_2) \cup D$, where $|D| \le d$.

Let $I_2$ be an $M_2$-independent subset of $I_1$ of size $|I_1|-k \ge d(d+1)+1$. 
For each $e \in I_2$,  we have $r_{M_2\con e}(\cup \cL_e)\ge (k+1)-k = 1$,
so $e$ is contained in a line $L_e$ with at least $q+2$ points in $M_2$.

Each $L_e$ contains $e$, and at most one other point in $I_2$, so there are at
least $|I_2|/2 > \binom{d+1}{2}$ distinct lines $L_e$. Therefore, $M_2$ has a
collection $\cL$ of lines, each with more than $q+1$ points, such that $|\cL| >
\binom{d+1}{2}$. Each line in $\cL$ must contain a point of $M_2 \del E(R_2)$.
However, $|M_2 \del E(R_2)| \le d$, so there are at most $\binom{d}{2}$ lines
of $M$ containing two points of $M_2 \del E(R_2)$, and by Lemma~\ref{pgframe},
there are at most $d$ lines of $M$ containing $q+2$ points, but just one point
of $M_2 \del E(R_2)$. This gives $|\cL| \le d + \binom{d}{2} = \binom{d+1}{2}$,
a contradiction. 
\end{proof}
Since $r(M_1) \ge n' \ge n+k+1$, we get the required minor $N$ from the above claim and Lemma~\ref{contractunstable}. 
\end{proof}

\section{The main theorems}

The following result implies
Theorems~\ref{main2} and~\ref{main1}:

\begin{theorem}
Let $q$ be a prime power, and let $\cM \subseteq \EX(U_{2,q^2+q+1})$ be a base-$q$
exponentially dense minor-closed class of matroids. There is an integer $k \ge
0$ such that 
\[h_{\cM}(n) = \grf{q}{k}{n}\]
for all sufficiently large $n$. 
Moreover, if $\cM\subseteq \EX(U_{2,q^2+1})$, then $k=0$.
\end{theorem}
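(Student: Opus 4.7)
The theorem follows from what the paper foreshadows as the \emph{key claim}: if $\cM \subseteq \EX(U_{2,q^2+q+1})$ is a base-$q$ exponentially dense minor-closed class of matroids containing $(q,k)$-overfull matroids of arbitrarily large rank, then for every sufficiently large $n$, $\cM$ contains a rank-$n$, $(q,k+1)$-full matroid carrying a $U_{2,q^2+1}$-restriction. Granting this, let $K$ be the largest integer for which $\cM$ contains $(q,K)$-full matroids of arbitrarily large rank. Such a $K$ exists: $K \ge 0$ because $\cM$ contains every projective geometry $\PG(n-1,q)$ and these are $(q,0)$-full, while $K < \infty$ because the Growth Rate Theorem bound $h_{\cM}(n) \le cq^n$, combined with the fact that $\grf{q}{k}{n}$ exceeds $cq^n$ at all large $n$ once $k$ is sufficiently large, forces $\cM$ to contain no $(q,k)$-full matroid of large rank for $k$ sufficiently large.

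The upper bound $h_{\cM}(n) \le \grf{q}{K}{n}$ for all sufficiently large $n$ is then immediate by contradiction: if $\cM$ contained $(q,K)$-overfull matroids of arbitrarily large rank, the key claim with $k = K$ would produce $(q,K+1)$-full matroids in $\cM$ of arbitrarily large rank, contradicting the maximality of $K$. For the matching lower bound, a direct computation gives $\grf{q}{K}{n} - \grf{q}{K-1}{n} = q^{n+K-1} - q^{2K-1}$, which is positive for $n > K$; hence when $K \ge 1$ each $(q,K)$-full matroid in $\cM$ of sufficiently large rank is also $(q,K-1)$-overfull, so the key claim with $k = K-1$ produces $(q,K)$-full rank-$n$ matroids in $\cM$ for every large $n$. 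When $K = 0$, the projective geometries themselves realise $\grf{q}{0}{n} = \tfrac{q^n-1}{q-1}$. Combining gives $h_{\cM}(n) = \grf{q}{K}{n}$ for all sufficiently large $n$. The ``moreover'' clause follows: if $\cM \subseteq \EX(U_{2,q^2+1})$, then no minor of a matroid in $\cM$ can have a $U_{2,q^2+1}$-restriction, so the key claim forbids $\cM$ from containing $(q,0)$-overfull matroids of arbitrarily large rank, forcing $K = 0$.

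To prove the key claim, first apply Lemma~\ref{weakroundnessreduction} to pass to a weakly round, $(q,k)$-overfull matroid $M \in \cM$ of rank much larger than $f_{\ref{criticalwin}}(n,q,k)$. Iteratively contract $(q,k)$-non-critical elements of $M$: Lemma~\ref{roundconnectivity} preserves weak roundness, and the definition of criticality preserves $(q,k)$-overfullness, so each intermediate matroid remains in $\cM$ with both properties intact. Arranging the initial rank of $M$ to be sufficiently large, we arrive at a weakly round, $(q,k)$-overfull minor $M' \in \cM$ of rank at least $f_{\ref{criticalwin}}(n,q,k)$ that carries a rank-$f_{\ref{criticalwin}}(n,q,k)$ set of $(q,k)$-critical elements. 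Lemma~\ref{gkthm} applied to the exponentially dense $M'$ supplies the requisite $\PG(f_{\ref{criticalwin}}(n,q,k)-1,q)$-minor, and Lemma~\ref{criticalwin} then delivers the rank-$n$, $(q,k+1)$-full minor with a $U_{2,q^2+1}$-restriction.

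The main obstacle is calibrating the rank bounds in this contraction argument: the initial rank of $M$ must be chosen large enough that after the contractions we reach a minor $M'$ both possessing a rank-$f_{\ref{criticalwin}}(n,q,k)$ set of critical elements and retaining enough density to trigger Lemma~\ref{gkthm} at exactly the $\PG$-minor rank demanded by Lemma~\ref{criticalwin}. Once these constants are chosen correctly, the real content of the proof has been packaged into Lemmas~\ref{roundconnectivity},~\ref{weakroundnessreduction},~\ref{gkthm}, and~\ref{criticalwin}, and the theorem follows.
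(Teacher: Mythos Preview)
Your reduction to the ``key claim'' and the deduction of the theorem from it are correct and match the paper's overall architecture. The gap is in your proof of the key claim itself.

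The assertion ``arranging the initial rank of $M$ to be sufficiently large, we arrive at a weakly round, $(q,k)$-overfull minor $M'$ of rank at least $f_{\ref{criticalwin}}(n,q,k)$ that carries a rank-$f_{\ref{criticalwin}}(n,q,k)$ set of $(q,k)$-critical elements'' is not justified, and no choice of initial rank will make it so. Greedily contracting non-critical elements preserves overfullness and weak roundness, but nothing prevents the process from running until the rank is tiny: it is entirely possible that every weakly round $(q,k)$-overfull matroid in $\cM$ above some fixed rank has a non-critical element, in which case the critical set never attains rank $f_{\ref{criticalwin}}(n,q,k)$ before the ambient rank drops below the threshold needed for Lemma~\ref{gkthm} (or indeed below $f_{\ref{criticalwin}}(n,q,k)$ itself). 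The ``calibration of constants'' you flag as the main obstacle is not a calibration issue at all; the argument is structurally incomplete.

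The paper closes this gap with substantially more machinery, and in particular with Lemma~\ref{spanningwin}, which you do not invoke. The idea is to contract not arbitrary non-critical elements but elements lying on the path to a \emph{specific} projective-geometry minor: one first finds a large $\PG$-minor via Lemma~\ref{gkthm}, then contracts a maximal subset of the relevant independent set that keeps the matroid overfull, so that the leftover elements are all critical; Lemma~\ref{criticalwin} then bounds how many are left. A second round of this, applied to a family of mutually skew $\PG$-minors, promotes one of them to an honest $\PG$-\emph{restriction} $R$ of a weakly round overfull minor $M_3$. Now take $M_4$ minimal among weakly round, $(q,k)$-overfull minors of $M_3$ retaining $R$ as a restriction. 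Lemma~\ref{spanningwin} forces $r(M_4) > r(R)$ (otherwise the desired rank-$n$ minor already exists), minimality forces every element outside $\cl_{M_4}(E(R))$ to be critical, and weak roundness forces that set to have large rank---finally giving the large critical set coexisting with a large $\PG$-minor that Lemma~\ref{criticalwin} needs.
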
		

\begin{proof}
By the Growth Rate Theorem, 
$\cM$ contains all projective geometries over $\GF(q)$ 
and, hence, $\cM$ contains $(q,0)$-full matroids of arbitrarily large
rank. We may assume that the there are $(q,0)$-overfull matroids
of arbitrarily large rank, since otherwise the theorem holds.
By the Growth Rate Theorem,
there is a maximum integer $k\ge 0$ such that
$\cM$ contains $(q,k)$-overfull matroids
of arbitrarily large rank, and there is an integer $m \ge 0$ 
such that $\PG(m-1,q') \notin \cM$ for all $q' > q$. 

To prove the result, it suffices to show that, for all $n > k+1$, 
there is a rank-$n$ matroid $M \in \cM$ that is $(q,k+1)$-full 
and has a $U_{2,q^2+1}$-restriction. Fix an integer $n > k+1$, and suppose for a contradiction that this $M$ does not exist. 

Let $s=f_{\ref{criticalwin}}(n,q,k)$, and
$m_4 = \max(m,s,f_{\ref{spanningwin}}(n,q,k))$. 
Let $m_3$ be an integer such that
\[ \frac{q^{m_3}-1}{q-1} > \alpha_{\ref{gkthm}}(m_4,q-\tfrac{1}{2},q^2+q-1)
\left(\frac{q^2+q-1}{q-\tfrac{3}{2}}\right)^s(q-\tfrac{1}{2})^{m_3+s-1}.\]
% (q-\tfrac{1}{2})^{r}
%< \left(\frac{q-1}{q(q^2+q-1)}\right)^s q^{r}.\]
Let $m_2 = \max(m,s m_3)$, and choose
an integer $m_1>m$ such that
\[ \alpha_{\ref{gkthm}}(m_2, q-\tfrac{1}{2},q^2+q-1) (q-\tfrac{1}{2})^{r}
\le \frac{q^{r}-1}{q-1}\]
for all $r \ge m_1$. 
By Lemma~\ref{weakroundnessreduction},
$\cM$ contains weakly round, $(q,k)$-overfull matroids
of arbitrarily large rank; let $M_1\in \cM$ be a weakly round, $(q,k)$-overfull matroid
with rank at least $m_1$. By Lemma~\ref{gkthm},
$M_1$ has a $\PG(m_2,q')$ minor $N_1$
for some $q' > q - \tfrac{1}{2}$; since $m_2\ge m$, we have $q'=q$.
Let $I_1$ be an independent set of $M_1$ such that
$N_1$ is a spanning restriction of $M_1\con I_1$, and
 choose $J_1\subseteq I_1$ maximal 
such that $M_1\con J_1$ is $(q,k)$-overfull.

Let $M_2=M_1\con J_1$ and let $I_2=I_1-J_1$.
By our choice of $J_1$, each element in $I_2$
is $(q,k)$-critical in $M_2$. Since $m_2 \ge s$, 
Lemma~\ref{criticalwin} gives $|I_2|<s$. 
Choose a collection $(F_1,\ldots,F_s)$ of mutually 
skew rank-$m_3$ flats in the projective geometry $N_1$; each $F_i$ satisfies $r(M_2|F_i) \le m_3+s-1$ and
$\elem(M_2|F_i)= \frac{q^{m_3}-1}{q-1}$.
By choice of $m_3$, and by Lemma~\ref{skewsubset} with 
$\mu = q - \tfrac{1}{2}$ for each $i\in\{1,\ldots,s\}$,
there is a flat $F'_i\subseteq F_i$ of $M_2$ that 
is skew to $I_2$ and satisfies $\elem(M_2|F_i') \ge \alpha_{\ref{gkthm}}(m_4,q-\tfrac{1}{2},q^2+q-1)(q-\tfrac{1}{2})^{r_{M_2}(F_i')}$.
%\[\elem(M_2|F'_i) \ge\left(\frac{q-1}{q(q^2+q-1)}\right)^s q^{r_{M_2}(F'_i)}. \]
Note that, since the sets $(F'_1,\ldots,F'_s)$ are
mutually skew in $M_2\con I_2$ and each of these
sets is skew to $I_2$ in $M$,
the flats $(F'_1,\ldots,F'_s)$ are mutually skew in $M_2$.

By Lemma~\ref{gkthm},
$M_2|F'_i$ has a $\PG(m_4-1,q')$ minor $P_i$
for some $q' > q-\tfrac{1}{2}$; since $m_4\ge m$, we have $q'=q$.
Let $X_i$ be an independent set of $M_2|F'_2$ such that
$P_i$ is a spanning restriction of $M_2\con X_i$.
Now choose $Z\subseteq X_1\cup\cdots\cup X_s$ maximal
such that $M_2\con Z$ is $(q,k)$-overfull.
Let $M_3 = M_2\con Z$. Each element of $X_1\cup\cdots\cup X_s-Z$ 
is $(q,k)$-critical
in $M_3$, and $P_i$ is a minor of $M_3$ for each $i$. The $X_i$ 
are mutually skew in $M_3$ and hence pairwise disjoint; thus, by Lemma~\ref{criticalwin}, there exists
$i_0\in\{1,\ldots,s\}$ such that $X_{i_0}-Z=\varnothing$
and, hence, $P_{i_0}$ is a restriction of $M_3$; let $R=P_{i_0}$.

Choose a minor $M_4$ of $M_3$ that is minimal such that:
\begin{itemize}
\item $M_4$ is weakly round, and $(q,k)$-overfull,
\item $M_4$ has $R$ as a restriction.
\end{itemize}
By Lemma~\ref{spanningwin}, $r(M_4)>r(R)$.
Every element of $E(M_4)-\cl_{M_4}(E(R))$ is $(q,k)$-critical
and, since $M_4$ is weakly round,
$r(M_4 \del \cl_{M_4}(E(R)))\ge r(M_4)-2 \ge m_4-1 \ge s$.
We now get a contradiction from Lemma~\ref{criticalwin}.
\end{proof}
\section*{References}
\newcounter{refs}
\begin{list}{[\arabic{refs}]}
{\usecounter{refs}\setlength{\leftmargin}{10mm}\setlength{\itemsep}{0mm}}

\item\label{gk}
J. Geelen, K. Kabell,
Projective geometries in dense matroids, 
J. Combin. Theory Ser. B 99 (2009), 1--8.

\item\label{gkw}
J. Geelen, J.P.S. Kung, G. Whittle,
Growth rates of minor-closed classes of matroids,
J. Combin. Theory. Ser. B 99 (2009), 420--427.

\item\label{gn}
J. Geelen, P. Nelson, 
The number of points in a matroid with no n-point line as a minor, 
J. Combin. Theory. Ser. B 100 (2010), 625--630.

\item\label{gw}
J. Geelen, G. Whittle,
Cliques in dense $\GF(q)$-representable matroids, 
J. Combin. Theory. Ser. B 87 (2003), 264--269.

\item\label{kung}
J.P.S. Kung,
Extremal matroid theory, in: Graph Structure Theory (Seattle WA, 1991), 
Contemporary Mathematics, 147, American Mathematical Society, Providence RI, 1993, pp.~21--61.

\item\label{lovasz}
L. Lov\'asz,
Selecting independent lines from a family of lines in a space,
Acta Sci. Math. 42 (1980), 121--131.

\item\label{sqf}
P. Nelson,
Growth rate functions of dense classes of representable matroids, 
submitted (2011).

\item\label{thesis}
P. Nelson,
Exponentially Dense Matroids,
Ph.D thesis, University of Waterloo (2011). 

\item \label{oxley}
J. G. Oxley, 
Matroid Theory,
Oxford University Press, New York (2011).
\end{list}		
\end{document}